\title[Homology from Trisection Diagrams]{Calculating the homology and intersection form of a 4-manifold from a trisection diagram}
\author{Peter Feller}
\author{Michael Klug}
\author{Trent Schirmer}
\author{Drew Zemke}
\newtheorem{theorem}{Theorem}[section]
\newtheorem{proposition}[theorem]{Proposition}
\newtheorem{lemma}[theorem]{Lemma}
\newtheorem{corollary}[theorem]{Corollary}
\theoremstyle{definition}
\newtheorem{definition}[theorem]{Definition}
\newtheorem{rmk}[theorem]{Remark}
\newtheorem*{example*}{Example}
\numberwithin{equation}{section}
\newcommand{\R}{\mathbb{R}}
\newcommand{\Z}{\mathbb{Z}}
\newcommand{\dmat}[4]
{\left(
    \begin{array}{cc}
        #1 & #2 \\
        #3 & #4
    \end{array}
\right)}
\begin{document}

\begin{abstract}
Given a diagram for a trisection of a 4-manifold $X$, we describe the
homology and the intersection form of $X$ in terms of the three subgroups of $H_1(\Sigma;\mathbb{Z})$ generated by the three sets of curves and the
intersection pairing on the diagram surface $\Sigma$. This includes explicit
formulas for the second and third homology groups of $X$ as well an algorithm to compute the intersection form.  Moreover, we show that all $(g;k,0,0)$-trisections admit ``algebraically trivial'' diagrams.
\end{abstract}

\maketitle

\section{Introduction}

In this note\footnote{This text has been submitted to appear as part of the Proceedings of the National Academy of Sciences collection on trisections of 4-manifolds.}, we describe how to compute the homology and intersection form of a $4$-manifold using the algebraic intersection numbers of the curves appearing in any trisection diagram for the $4$-manifold.  Moreover, we show that a large class of trisections admit ``algebraically trivial'' diagrams.

\subsection{Homology and the intersection form} The \emph{homology} of an $n$-dimensional manifold $X$ is a collection of $n+1$ abelian groups $H_0(X),H_1(X),\ldots , H_n(X)$.  One may think of the elements of $H_i(X)$ as equivalence classes of oriented closed $i$-dimensional submanifolds immersed in $X$--where two such $i$-manifolds $Y_1$ and $Y_2$ are considered equivalent if there is an $(i+1)$-dimensional manifold immersed in $X$ which is bounded by $Y_1$ and $Y_2$.  If $\Sigma$ is a closed surface, for example, then the elements of $H_1(\Sigma)$ can be thought of as immersed oriented curves in $\Sigma$, where an orientation in this case amounts to assigning a direction to a curve.  Two sets of curves cancel one another out if they bound an immersed subsurface with orientations which match the boundary-orientation of the surface.  Addition of homology classes corresponds to taking the union of curves.

In the case of a $4$-manifold $X$, the elements of $H_2(X)$ correspond to embedded surfaces inside of $X$.  Surfaces embedded in a $4$-manifold generically intersect one another in points. 
(To see why, consider the intersection of the $x$-$y$ plane with the $z$-$w$ plane in $\R^4;$ in fact, this is an appropriate local model for a generic intersection of surfaces in a $4$-manifold.) 
The \emph{intersection form} of a $4$-manifold is a symmetric, bilinear product on $H_2(X)$ which, given a pair of surfaces $\Sigma_1$ and $\Sigma_2$ as input, outputs the ``algebraic count'' of the intersections between them.

\subsection{Motivation}

Whenever a new description of a topological object arises, it is natural to ask how this can be used to describe the fundamental algebraic invariants of the space, such as its homology and intersection form.  The homology and intersection forms of $4$-manifolds reveal an interesting dichotomy between the smooth and the topological category of 4-manifolds.  On the one hand, results such as Rokhlin's theorem~\cite{Rochlin} and Donaldson's diagonalization theorem~\cite{Donaldson} impose strong restrictions on the intersection form of \textit{smooth} closed oriented $4$-manifolds; but on the other hand \emph{all} symmetric unimodular bilinear forms occur as the intersection form of some \textit{topological} closed oriented $4$-manifolds, by Freedman's work~\cite{Freedman}. Since every smooth $4$-manifold admits a trisection~\cite{GayKirby}, it is worthwhile to be able to compute the homology and the intersection form of a $4$-manifold directly from a trisection diagram.  One hope is that Rohklin and Donaldson's strong restrictions on intersection forms will become more apparent from this new perspective.  More generally, we expect the calculation of these elementary yet fundamental invariants to play an important role in the continued development of the theory of trisections.

\subsection{Results}

Let $(\Sigma;V_1,V_2,V_3)$ denote a $(g;k_1,k_2,k_3)$-trisection of a smooth closed oriented $4$-manifold $X$, and let $W_\alpha=V_1\cap V_2$, $W_\beta=V_1\cap V_3,$ and $W_\gamma=V_2\cap V_3$.  For
$\epsilon \in \{\alpha,\beta,\gamma\}$ let $L_\epsilon=\ker(\iota_\epsilon)$, where $\iota_\epsilon: H_1(\Sigma)\rightarrow H_1(W_\epsilon)$ is the map induced by inclusion.  The groups $L_\epsilon$ are maximal Lagrangian subgroups of $H_1(\Sigma)$ generated by any choice of oriented defining curves for $W_\epsilon$.

We show that the following chain complex gives rise to the homology of $X$.  (The groups depicted are in dimensions four through zero, and the complex is understood to have trivial groups and maps in other dimensions.)
\begin{equation}
	\label{eqintro:4dHomology}
    \Z \xrightarrow{0} L_\alpha\cap L_\gamma
    	\xrightarrow{\partial_3}\frac{L_\gamma}{L_\beta\cap L_\gamma}
        \xrightarrow{\partial_2}\textnormal{Hom}(L_\alpha\cap L_\beta,\Z)
        \xrightarrow{0}\Z,
\end{equation}

The map $\partial_3$ is induced by the inclusion $L_\alpha\cap L_\gamma\hookrightarrow L_\gamma$ and
$\partial_2$ is given by $[x]\mapsto \langle \cdot, x\rangle_\Sigma$ ($\langle\cdot,\cdot\rangle_\Sigma$ denotes the intersection form on $H_1(\Sigma,\Z)$).

We can also describe the homology of $X$ in a way that is symmetric in the three Lagrangian subspaces
$L_\epsilon$.  As shown in Corollary \ref{cor:H2Formula}, the second homology group of $X$ can be identified with the group
\[
	\frac{\{a+b+c=0\}}{\{ c=0\}+\{b=0\}+\{a=0\}},
\]
where $\{a+b+c=0\}$ denotes the subgroup of $L_\alpha\times L_\beta\times L_\gamma$ of elements
$(a,b,c)$ such that $a+b+c=0$, and the summands in the denominators are likewise understood to be the subgroups of
$\{a+b+c=0\}$ satisfying the given linear equations.  With respect to this symmetric presentation of $H_2(X)$, the intersection form is given by the following equation.
   \begin{equation}\label{eqintro:pairing}
  	\left\langle(a,b,c),(a',b',c')\right\rangle
  		=\langle a,b'\rangle_\Sigma
  \end{equation}
In Section \ref{sec:applications} we describe various methods of computing the intersection form directly from a trisection diagram.  In particular, Propositions \ref{prop:GeneralMatrix} and \ref{prop:LovelyProduct} give explicit formulas in terms of the three algebraic intersection matrices arising from the defining curves of the trisection diagram.  The following example demonstrates one of our methods.

\begin{example*}
Let us compute the intersection form of the $(2;0,0,0)$-trisection
    of $S^2 \times S^2$ with diagram shown in Figure \ref{fig:s2s2diagram}.
    Using the $\alpha_i$, $\beta_i$, $\gamma_i$ as labeled in the figure, we obtain
    the following intersection matrices.
    \begin{align*}
        &_\alpha Q_\beta = \dmat{0}{-1}{1}{0},
            \:
        _\gamma Q_\beta = \dmat{0}{-1}{-1}{0},
            \:
        _\alpha Q_\gamma = \dmat{0}{1}{-1}{0}
    \end{align*}
    Using Proposition \ref{prop:LovelyProduct}, we find that
    the intersection form of $S^2 \times S^2$ has matrix
    \[
        \Phi_{S^2 \times S^2} = {_\gamma}Q_\beta({_\alpha}Q_\beta)^{-1}{_\alpha}Q_\gamma
            = \dmat{0}{1}{1}{0},
    \]
    as expected.
\end{example*}

\begin{figure}
	\begin{center}
    \begin{overpic}[width=0.6\textwidth]{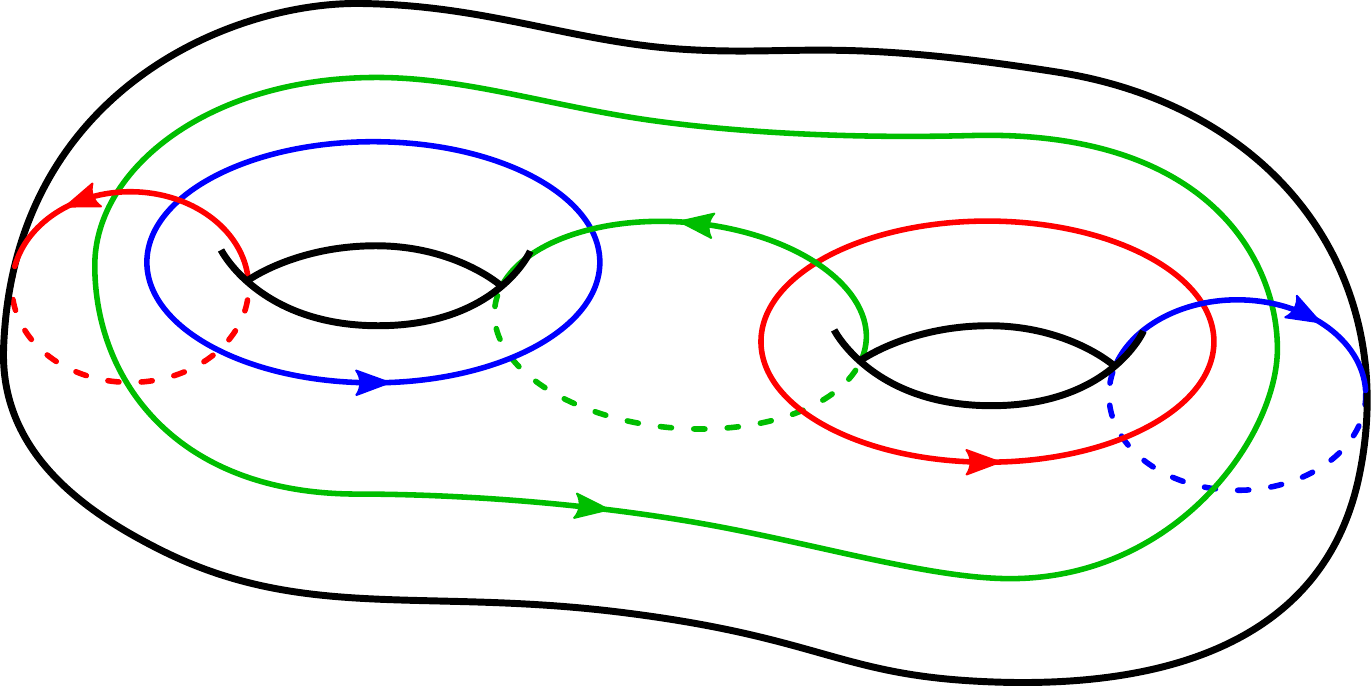}
    	\definecolor{mygreen}{rgb}{0, 0.745, 0}
        \put (-5,29) {\color{red}$\alpha_1$}
        \put (71,13) {\color{red}$\alpha_2$}
        \put (101,20) {\color{blue}$\beta_1$}
        \put (25,17) {\color{blue}$\beta_2$}
        \put (50,30) {\color{mygreen}$\gamma_1$}
        \put (42,9) {\color{mygreen}$\gamma_2$}
    \end{overpic}
    \end{center}
    \caption{A $(2;0,0,0)$-trisection diagram for $S^2 \times S^2$.}
    \label{fig:s2s2diagram}
\end{figure}

%

The strategy we use to compute the homology of $X$ is to establish that the complex in~\eqref{eqintro:4dHomology} is isomorphic to a chain complex arising from a handle decomposition of the trisected $4$-manifold $X$. However, as an amusing exercise, one may use the fact that any two trisection diagrams for $X$ are related by handle slides and stabilizations \cite{GayKirby} to show that all chain complexes of the form of~\eqref{eqintro:4dHomology} coming from trisections of $X$ are chain homotopy equivalent.
Similarly, one many directly check that for any two trisections of $X$ the pairings described in~\ref{eqintro:pairing} are isomorphic.

Finally, Theorem \ref{thm:AlgebraicTriviality} asserts that all $(g;k,0,0)$-trisections admit ``algebraically trivial'' diagrams.  One consequence of this is that the triple of algebraic intersection matrices arising from a trisection diagram do not contain any additional information beyond the homology and intersection form (thus giving a negative answer to a speculative question appearing in \cite{GayKirby}, at least for this class of trisections).

One may speculate on whether Theorem \ref{thm:AlgebraicTriviality} has further applications.
There is a large difference between algebraically trivial diagrams and geometrically trivial diagrams--a gap which is reminiscient of the failure of the h-cobordism theorem for cobordisms between $4$-manifolds.  On the other hand, if new restrictions on intersection forms are to be found using trisection diagrams (or if old, previously known restrictions are to be recovered more easily), then it seems likely that Theorem \ref{thm:AlgebraicTriviality} will play a role.  Additionally, the result suggests that the relationship between the Torelli group and the Georitz group of a surface might play an important role in understanding exotic $4$-dimensional phenomena.

\subsection{Acknowledgements}
The authors would like to thank AIM and the organizers of its March 2016 conference on trisections, without which this collaboration would not have occurred.  The authors would also like to thank Rob Kirby, Jeff Meier, and Matthias Nagel for helpful comments and conversations.


\section{Three-Dimensional Lemmas}

Our four-dimensional calculations will depend on our ability to compute intersections and linking numbers between various attaching circles and belt spheres embedded in $S^3$ or $\#^k(S^1\times S^2)$.  The lemmas of this section describe how to carry out these calculations inside the first homology of a Heegaard
splitting surface in one of these 3-manifolds.

For a closed 3-manifold $M$ we denote by $\langle \cdot,\cdot \rangle_M$ the intersection pairing
	on $H_2(M) \times H_1(M)$, and for a closed surface $\Sigma$ we denote by
    $\langle \cdot,\cdot \rangle_\Sigma$ the intersection pairing on $H_1(\Sigma) \times H_1(\Sigma)$.

\begin{lemma}\label{lem:Lagrangian}
Suppose $W$ is a three-dimensional handlebody with boundary surface $\Sigma$, and let $\iota_*:H_1(\Sigma)\rightarrow H_1(W)$ be the map induced by inclusion. Then:

\begin{enumerate}
	\item If $\{ \alpha_1,\ldots , \alpha_g\}$ is any defining set of curves for $W$ in $\Sigma$, then $\{ [\alpha_1],\ldots , [\alpha_g]\}$ forms a basis of $\ker(\iota_*),$ and

	\item For all $x,y\in \ker(\iota_*)$, $\langle x,y\rangle_\Sigma=0$ and $\ker(\iota_*)$ is maximal with respect to this property.  In other words, $\ker(\iota_*)$ is a maximal Lagrangian subspace of $H_1(\Sigma)$.
\end{enumerate}
\end{lemma}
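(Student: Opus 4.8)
The plan is to prove the two parts essentially independently, using standard Mayer--Vietoris and Poincar\'e--Lefschetz duality arguments for the handlebody $W$.

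For part (1), the strategy is to use the long exact sequence of the pair $(W,\Sigma)$ together with excision. First I would recall that a handlebody of genus $g$ deformation retracts onto a wedge of $g$ circles, so $H_1(W)\cong\Z^g$ and $H_2(W)=0$. Given a defining set of curves $\{\alpha_1,\dots,\alpha_g\}$ for $W$ (meaning the curves bound disjoint embedded disks $D_1,\dots,D_g$ in $W$ that cut $W$ into a ball), I would observe that the $[\alpha_i]$ certainly lie in $\ker(\iota_*)$, since each $\alpha_i$ bounds the disk $D_i$ in $W$. It then remains to show they \emph{generate} the kernel and are \emph{independent}. Independence: the $[\alpha_i]$ form a basis for $H_1(\Sigma)$ together with dual curves $\{\beta_i\}$ with $\langle\alpha_i,\beta_j\rangle_\Sigma=\delta_{ij}$ (a standard fact about defining curves on a Heegaard-type surface), so in particular the $[\alpha_i]$ are linearly independent in $H_1(\Sigma)$. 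Generation: cutting $W$ along the disks $D_i$ yields a ball, which gives a handle-type decomposition showing that the map $\iota_*$ is surjective with the $[\beta_i]$ mapping to a basis of $H_1(W)$; a rank count (or a direct argument with the long exact sequence $H_2(W,\Sigma)\to H_1(\Sigma)\to H_1(W)\to H_1(W,\Sigma)$, where $H_2(W,\Sigma)\cong\Z^g$ is generated by the cores of the $1$-handles rel boundary and $H_1(W,\Sigma)=0$) then forces $\ker(\iota_*)$ to have rank exactly $g$, hence to be spanned by the $[\alpha_i]$.

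For part (2), the key input is Poincar\'e--Lefschetz duality for the compact oriented $3$-manifold $W$ with boundary $\Sigma$. The vanishing of the pairing on $\ker(\iota_*)$ is the classical ``half lives, half dies'' phenomenon: if $x,y\in\ker(\iota_*)$ then $x$ and $y$ bound $2$-chains $C_x,C_y$ in $W$, and $\langle x,y\rangle_\Sigma$ computes the algebraic intersection of these chains with the boundary, which one shows vanishes by pushing $C_x$ slightly into the interior of $W$ so that it becomes disjoint from $\Sigma=\partial W$; more cleanly, one can cite that $\ker(\iota_*)\to H_1(\Sigma)$ is the annihilator of itself under the duality pairing $H_1(\Sigma)\times H_1(\Sigma)\to\Z$ restricted from $H_1(W)\cong H^1(W,\Sigma)$ paired with $H_1(W,\Sigma)\cong H^2(W)$. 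Concretely: from the long exact sequence and duality one gets $\mathrm{rank}\,\ker(\iota_*)=g=\tfrac12\mathrm{rank}\,H_1(\Sigma)$, and then maximality of the isotropic subspace follows because any isotropic subspace of a nondegenerate symplectic lattice of rank $2g$ has rank at most $g$. Since we have already exhibited a rank-$g$ isotropic subgroup, it is maximal.

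I expect the main obstacle to be bookkeeping rather than conceptual: being careful about the distinction between ``Lagrangian'' in the sense of a direct summand of rank $g$ on which the form vanishes versus merely an isotropic subgroup, and making sure the defining curves give a \emph{primitive} sublattice (so that the quotient $H_1(\Sigma)/\ker(\iota_*)$ is free, which is needed to identify it with $H_1(W)$ and to run the rank count cleanly). This is handled by the observation that $\{[\alpha_i]\}$ extends to a symplectic basis $\{[\alpha_i],[\beta_i]\}$ of $H_1(\Sigma)$, which simultaneously gives primitivity, the rank count, and the vanishing of the form on the span of the $[\alpha_i]$ in one stroke; the only real work is justifying the existence of such dual curves $\beta_i$, which is a standard fact about handlebodies that I would either cite or prove by an explicit picture of the genus-$g$ surface cut along the $\alpha_i$.
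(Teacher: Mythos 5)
Your proposal is correct, and it reaches the same conclusions as the paper by a route that is more homological-algebraic where the paper's is more geometric. For the vanishing of the form on $\ker(\iota_*)$, the paper simply represents $x$ and $y$ by disjoint unions of curves parallel to the $\alpha_i$ (immediate once Claim (1) is known), whereas you invoke the ``half lives, half dies'' duality statement for $(W,\Sigma)$; for maximality, the paper's primary argument is a band-sum move making a representative of $z$ disjoint from the $\alpha$-curves, while your argument via extending $\{[\alpha_i]\}$ to a symplectic basis is exactly the alternative the paper mentions in its last sentence. One place where your write-up is actually \emph{more} complete than the paper's: for Claim (1) the paper only observes that the $[\alpha_i]$ are $g$ linearly independent elements of the rank-$g$ group $\ker(\iota_*)$, which a priori only shows they span a finite-index subgroup; you close this gap by noting that the span of the $[\alpha_i]$ is primitive (being half of a symplectic basis, equivalently a direct summand), so a finite-index containment forces equality. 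The same primitivity point is what makes your maximality argument airtight (a non-primitive rank-$g$ isotropic sublattice need not be maximal as an isotropic subgroup), and you correctly flag it. The only ingredient you defer is the existence of dual curves $\beta_j$ with $\langle[\alpha_i],[\beta_j]\rangle_\Sigma=\delta_{ij}$ for an arbitrary defining set of curves; this is standard (cut $\Sigma$ along the $\alpha_i$ to a connected planar surface and join the two copies of each $\alpha_i$ by an arc), and citing or drawing it as you propose is fine.
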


\begin{proof}
Since $H_1(\Sigma)\cong \Z^{2g}$, $H_1(W)\cong \Z^g$, and $\iota_*$ is a surjection, $\ker(\iota_*)$ is a free abelian subgroup of rank $g$.  Since $\{\alpha_1,\ldots , \alpha_g\}$ is a defining set of curves, the set $\{[\alpha_1],\ldots , [\alpha_g]\}$ is linearly independent in $H_1(\Sigma)$ and the subgroup it generates lies inside of $\ker(\iota_*)$. This proves Claim (1).

If $x,y\in \ker(\iota_*)$, then by Claim (1) $x$ and $y$ can be represented by disjoint collections of curves isotopic to the elements of $\{\alpha_1,\ldots ,\alpha_g\},$ and so it follows that $\langle x,y\rangle_\Sigma=0$.
Moreover, if $z$ is any element of $H_1(\Sigma)$ satisfying $\langle x,z\rangle_\Sigma=0$ for all $x\in \ker(\iota_*)$, then any representative $\zeta$ of $z$ can be band-summed along subarcs of the $\alpha$-curves to a new representative $\zeta'$ of $z$ which is disjoint from the $\alpha$-curves. This implies that $z\in \ker(\iota_*)$, which establishes the claim that $\ker(\iota_*)$ is maximal.  Alternatively, maximality follows from the observation that the $\alpha$-curves form half of a symplectic basis of $H_1(\Sigma)$.
\end{proof}

For the remainder of this section, let $(\Sigma; W_\alpha,W_\beta)$ be a Heegaard splitting of a $3$-manifold $M,$ let $\iota_M:H_1(\Sigma)\rightarrow H_1(M)$ and $\iota_\epsilon:H_1(\Sigma)\rightarrow H_1(W_\epsilon)$ denote the maps induced by inclusion (for $\epsilon \in \{\alpha, \beta\}$),
and set $L_\epsilon=\ker(\iota_\epsilon).$

For the following lemma, it is necessary to have an understanding of the Mayer-Vietoris map $\partial_*:H_2(M)\rightarrow H_1(\Sigma)$. Suppose $Q\subset M$ is an oriented embedded surface which represents $x\in H_2(M)$ and is transverse to $\Sigma$.  Then  $\partial_*x$ is represented by the family of curves of $Q\cap \Sigma$ with the orientations they inherit as the boundary of the oriented surface $Q\cap W_\alpha$ using the standard outward normal first convention.


\begin{lemma}\label{lem:3dHomology}
Let $\partial:H_2(M)\rightarrow H_1(\Sigma)$ be the boundary map coming from the Mayer-Vietoris sequence for the triple $(\Sigma;W_\alpha,W_\beta)$.  Then:
\begin{enumerate}
	\item The map $\partial$ is an isomorphism between $H_2(M)$ and $L_\alpha\cap L_\beta$;

	\item For any $z\in H_2(M)$ and $y\in H_1(\Sigma)$,
    	$\langle\iota_My,z\rangle_M = \langle\partial z, y\rangle_\Sigma$;

	\item If $H_1(M)$ is torsion free and $x \in H_1(\Sigma)$, then $x\in\ker(\iota_M)$ if and only if $\langle y,x\rangle_\Sigma=0$ for all $y\in L_\alpha\cap  L_\beta.$
\end{enumerate}
\end{lemma}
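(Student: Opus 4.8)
The plan is to analyze the Mayer--Vietoris sequence for the triple $(\Sigma; W_\alpha, W_\beta)$ directly and then extract the three claims. First I would write down the relevant portion of the sequence,
\[
    H_2(M) \xrightarrow{\partial} H_1(\Sigma) \xrightarrow{(\iota_\alpha, \iota_\beta)} H_1(W_\alpha) \oplus H_1(W_\beta) \xrightarrow{} H_1(M) \to \cdots,
\]
after noting that $H_2(W_\alpha) = H_2(W_\beta) = H_2(\Sigma) = \Z$ contribute nothing once one checks (via the low-degree terms $H_2$ of the pieces mapping to $H_2(M)$, or via orientability of $M$) that the map out of $H_3(M)\to H_2(\Sigma)$ is surjective; more simply, $H_2$ of each handlebody vanishes, so $H_2(M)$ injects via $\partial$. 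Exactness at $H_1(\Sigma)$ then immediately gives $\operatorname{im}(\partial) = \ker(\iota_\alpha,\iota_\beta) = \ker(\iota_\alpha) \cap \ker(\iota_\beta) = L_\alpha \cap L_\beta$, and since $\partial$ is injective this proves Claim (1).

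For Claim (2), the key is the geometric description of $\partial$ recalled just before the lemma: if $z = [Q]$ for an embedded oriented surface $Q \subset M$ transverse to $\Sigma$, then $\partial z$ is represented by $Q \cap \Sigma$, oriented as the boundary of $Q \cap W_\alpha$. I would compute $\langle \iota_M y, z\rangle_M$ by representing $y$ by a curve $c \subset \Sigma$, pushing $c$ slightly into $W_\alpha$ to get a curve $c'$ representing $\iota_M y \in H_1(M)$, and observing that the $3$-manifold intersection number $\langle [c'], [Q] \rangle_M$ equals the signed count of points of $c' \cap Q$. These intersection points can be matched bijectively (with signs) to the intersection points of $c$ with the curve system $Q \cap \Sigma$ inside $\Sigma$, because pushing $c$ off $\Sigma$ into $W_\alpha$ moves it across $\Sigma$ exactly at the places where $c$ meets $\partial(Q\cap W_\alpha) = Q \cap \Sigma$; the outward-normal-first orientation convention for $\partial_*$ is precisely what makes the signs agree with the surface intersection pairing $\langle \partial z, y\rangle_\Sigma$. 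This sign-matching is the step I expect to be the main obstacle: it requires carefully setting up local orientation conventions for $H_2(M)\times H_1(M)\to \Z$, for $\langle\cdot,\cdot\rangle_\Sigma$, and for the Mayer--Vietoris boundary, and checking they are mutually consistent (including the sign asymmetry coming from the choice of $W_\alpha$ rather than $W_\beta$, which is why Claim (3) must invoke both handlebodies).

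For Claim (3), suppose $H_1(M)$ is torsion free. The forward direction is immediate from Claim (2): if $x \in \ker(\iota_M)$ then for every $y \in L_\alpha \cap L_\beta$ we have $\langle y, x\rangle_\Sigma = \pm\langle \iota_M x, \partial^{-1}y\rangle_M = 0$. For the converse, suppose $\langle y, x\rangle_\Sigma = 0$ for all $y \in L_\alpha \cap L_\beta$; by Claim (2) this says $\langle \iota_M x, z\rangle_M = 0$ for all $z \in H_2(M)$. Since $H_1(M)$ is finitely generated and torsion free, Poincar\'e duality and the universal coefficient theorem make the pairing $H_2(M) \times H_1(M) \to \Z$ a perfect pairing (the adjoint $H_1(M) \to \operatorname{Hom}(H_2(M),\Z)$ is an isomorphism), so $\langle \cdot, \iota_M x\rangle_M \equiv 0$ forces $\iota_M x = 0$, i.e.\ $x \in \ker(\iota_M)$. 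I would remark that the torsion-free hypothesis is exactly what is needed for this nondegeneracy argument, and that in the applications $M$ is $S^3$ or $\#^k(S^1\times S^2)$, where it holds.
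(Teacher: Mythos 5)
Your proposal is correct and follows essentially the same route as the paper: Claim (1) by exactness of the Mayer--Vietoris sequence using $H_2(W_\alpha)\oplus H_2(W_\beta)=0$, Claim (2) by matching intersection points of transverse representatives along $Q\cap\Sigma$ with the sign handled by the orientation convention for $\partial$, and Claim (3) by combining (1) and (2) with nondegeneracy of the Poincar\'e duality pairing when $H_1(M)$ is torsion free. The only blemish is the muddled opening remark asserting $H_2(W_\alpha)=H_2(W_\beta)=\Z$ and invoking $H_3(M)\to H_2(\Sigma)$, which is both incorrect and unnecessary, but you immediately supersede it with the correct observation that $H_2$ of a handlebody vanishes.
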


\begin{proof}
In the following portion of the Mayer-Vietoris sequence
\[ 
\begin{tikzcd}
	H_2(W_\alpha)\oplus H_2(W_\beta) \ar[r]
    & H_2(M) 
        \ar[r,"\partial"]
    & H_1(\Sigma) \ar[r, "\iota_\alpha\oplus\iota_\beta"] & H_1(W_\alpha)\oplus H_1(W_\beta)
\end{tikzcd}
\]
we have $H_2(W_\alpha)\oplus H_2(W_\beta)= 0$ and $\ker(\iota_\alpha\oplus\iota_\beta)= L_\alpha\cap L_\beta$, so by exactness $\partial$ is an injective map onto $L_\alpha\cap L_\beta.$

For the second claim, choose a representative $Q$ of $x$ which is transverse to $\Sigma$, and a representative $\gamma$ of $y$ which is transverse to $Q\cap \Sigma$.  Then $\gamma$ also represents $\iota_M y$, $Q\cap \Sigma$ represents $\partial_*x$, and since all intersections between $\gamma$ and $Q$ lie in $Q\cap \Sigma$ it follows that $\langle x,\iota_My\rangle_M$ agrees with $\langle\partial x,y\rangle_\Sigma$ up to sign.  The question of whether their signs agree is then a technical issue of orientation conventions.  Our convention leads to the requirement that $\partial_*x$ must always appear on the left hand side of the product $\langle\cdot,\cdot\rangle_\Sigma$.

For Claim (3), note Claims (1) and (2) imply that $\langle y,x\rangle_\Sigma =0$ for all $y\in L_\alpha\cap L_\beta$ if and only if $\langle \partial z,x\rangle_\Sigma=\langle z,\iota_Mx\rangle_M =0$ for all $z\in H_2(M)$.  This is the same as saying that $\iota_Mx$ lies in the kernel of the Poincar\'{e} duality map $PD:H_1(M)\rightarrow H^2(M)$, which is defined by the formula $PD(a)=\langle \cdot , a\rangle_M$ when $H_1(M)$ is torsion free.  Since $PD$ is an isomorphism, it has a trivial kernel, hence $\iota_Mx=0.$
\end{proof}

The intersection form of a $4$-manifold which admits a handle decomposition with no $1$-handles or $3$-handles is the same as the linking matrix of the framed link formed by the attaching circles of the $2$-handles \cite{GompfStipsicz}.  In the general case, in which a handle decomposition may have $1$- and $3$-handles, the intersection form can still be computed using a linking matrix, but it requires a slightly more general definition of a linking number that applies to null-homologous pairs of links in arbitrary $3$-manifolds (instead of just pairs of knots in $S^3$).

\begin{definition}\label{def:LinkingNumber}
Let $J$ and $K$ be oriented, null-homologous links in a $3$-manifold $M$, and let $F$ be a Seifert surface of $J$.  Then $\textnormal{lk}(J,K)=\langle F,K\rangle_M.$
\end{definition}

Here, a \emph{Seifert surface} of an oriented link $J$ is an embedded, oriented surface in $M$ whose boundary is $J$ (with matching induced orientation).  In the definition above, it is necessary that $K$ be null-homologous in $M$ as well, otherwise the linking number is not defined.  The usual properties of linking number (such as symmetry) all hold.

\begin{lemma}\label{lem:LinkingNumber}
Suppose $J$ is an embedded oriented union of curves in $\Sigma$ which forms a null-homologous link in $M$.
\begin{enumerate}
	\item There exists $j\in L_\alpha$ such that $\langle j,x\rangle_\Sigma = \langle [J],x\rangle_\Sigma$ for all $x\in  L_\beta$.
    \item For $j \in L_\alpha$ satisfying Claim (1), and for any other embedded oriented union of curves $K\subset \Sigma\setminus J$ which is null-homologous in $M$, $\textnormal{lk}(J,K)=\langle j,[K]\rangle_\Sigma$.
\end{enumerate}
\end{lemma}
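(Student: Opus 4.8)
The plan is to prove Claim~(1) by pinning down $\ker(\iota_M)$ via Mayer--Vietoris and exploiting that $L_\beta$ is Lagrangian, and then to prove Claim~(2) by building a Seifert surface for $J$ that is adapted to the Heegaard splitting.

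For Claim~(1), I would first observe that the Mayer--Vietoris sequence for $(\Sigma;W_\alpha,W_\beta)$ identifies $\ker(\iota_M)$ with $L_\alpha+L_\beta$: writing $\iota_M=j_\alpha\circ\iota_\alpha$ with $j_\alpha\colon H_1(W_\alpha)\to H_1(M)$, exactness of the portion $H_1(\Sigma)\to H_1(W_\alpha)\oplus H_1(W_\beta)\to H_1(M)$ shows that an element of $H_1(W_\alpha)$ dies in $H_1(M)$ if and only if it equals $\iota_\alpha x$ for some $x\in L_\beta$, i.e. $\ker(j_\alpha)=\iota_\alpha(L_\beta)$, whence $\ker(\iota_M)=\iota_\alpha^{-1}(\iota_\alpha(L_\beta))=L_\alpha+L_\beta$. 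Since $J$ is null-homologous in $M$ we have $[J]\in\ker(\iota_M)$, so I can write $[J]=a+b$ with $a\in L_\alpha$ and $b\in L_\beta$; I then set $j=a$. For any $x\in L_\beta$, Lemma~\ref{lem:Lagrangian} gives $\langle b,x\rangle_\Sigma=0$ because $L_\beta$ is Lagrangian, so $\langle j,x\rangle_\Sigma=\langle a+b,x\rangle_\Sigma=\langle[J],x\rangle_\Sigma$, which is exactly Claim~(1). (This $j$ is far from unique, since any element of $L_\alpha\cap L_\beta$ may be added without changing the property.)

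For Claim~(2) I would keep the decomposition $[J]=a+b$ and represent $a$ and $b$ by embedded oriented multicurves $A,B\subset\Sigma$. Because $[J]-[A]-[B]=0$ in $H_1(\Sigma)$, there is a $2$-chain $S$ in $\Sigma$ with $\partial S=J-A-B$; because $a\in\ker\iota_\alpha$ and $b\in\ker\iota_\beta$, there are $2$-chains $F_A\subset W_\alpha$ and $F_B\subset W_\beta$ with $\partial F_A=A$ and $\partial F_B=B$. Then $F:=S+F_A+F_B$ is a $2$-chain in $M$ with $\partial F=J$, i.e. a (possibly singular) Seifert surface for $J$; this suffices for computing $\textnormal{lk}(J,K)=\langle F,K\rangle_M$ because the linking number depends only on the homology class of the Seifert surface, so embeddedness and mutual disjointness of $S$, $F_A$, $F_B$ are not needed. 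Next I would push $K$ slightly into the interior of $W_\alpha$, obtaining a curve $K'$ isotopic to $K$ in $M$ and disjoint from $\Sigma$. Since $K'$ is then disjoint from $S\subset\Sigma$ and from $F_B\subset W_\beta$, we obtain
\[
  \textnormal{lk}(J,K)=\langle F,K'\rangle_M=\langle F_A,K'\rangle_M .
\]

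To finish, I would use a collar $\Sigma\times[0,1]\hookrightarrow W_\alpha$ of $\Sigma=\partial W_\alpha$: isotope $F_A$ rel boundary so that it meets the collar exactly in $A\times[0,\tfrac{2}{3}]$, and take $K'=K\times\{\tfrac12\}$ (with $K$ first made transverse to $A$ inside $\Sigma\setminus J$). Then $F_A\cap K'=(A\cap K)\times\{\tfrac12\}$, and a check of orientation conventions shows that each such point contributes the local intersection sign of $A$ and $K$ in $\Sigma$, so $\langle F_A,K'\rangle_M=\langle[A],[K]\rangle_\Sigma=\langle j,[K]\rangle_\Sigma$; combined with the previous display this gives $\textnormal{lk}(J,K)=\langle j,[K]\rangle_\Sigma$. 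I expect the only real difficulty to be the orientation bookkeeping in this last step: one must check that the collar computation produces $\langle[A],[K]\rangle_\Sigma$ with $a$ on the left (rather than $\langle[K],[A]\rangle_\Sigma$, and not off by a sign), consistently with the orientation conventions fixed earlier in the section for the Mayer--Vietoris boundary map $\partial_*$ and in Lemma~\ref{lem:3dHomology}(2). Everything else is formal homological algebra together with routine general position.
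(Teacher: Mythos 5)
Your argument for Claim (1) is correct and takes a genuinely different, more algebraic route than the paper: you identify $\ker(\iota_M)=L_\alpha+L_\beta$ from Mayer--Vietoris and split $[J]=a+b$ accordingly, whereas the paper produces its $j$ geometrically as $[F\cap\Sigma]$ for a Seifert surface $F$ of a pushoff of $J$ into $W_\beta$. Your computation of $\textnormal{lk}(J,K)$ for the particular element $j=a$ is also sound: the observation that the linking number may be computed against any $2$-chain bounding $J$ (since the difference of two such chains is a $2$-cycle pairing trivially with the null-homologous $K$) legitimately replaces the embedded Seifert surface of Definition \ref{def:LinkingNumber} by your singular chain $S+F_A+F_B$, and the collar computation isolates the contribution $\langle[A],[K]\rangle_\Sigma$; modulo the orientation bookkeeping you flag, this is consistent with the paper's convention that the class bounding in $W_\alpha$ occupies the left slot of $\langle\cdot,\cdot\rangle_\Sigma$.

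The genuine gap is in the quantifier of Claim (2): the statement asserts $\textnormal{lk}(J,K)=\langle j,[K]\rangle_\Sigma$ for \emph{every} $j\in L_\alpha$ satisfying Claim (1), and you verify it only for the specific $j=a$ produced by your decomposition. You remark parenthetically that $j$ is non-unique, but you never show the formula is insensitive to the choice; this independence is precisely what is used later to make the pairing $\Phi$ of Definition \ref{def:FormDefinition} well defined, so it cannot be omitted. To close the gap: if $j'$ also satisfies Claim (1), then $\langle j-j',x\rangle_\Sigma=0$ for all $x\in L_\beta$, so $j-j'\in L_\beta$ by maximality of the Lagrangian $L_\beta$ (Lemma \ref{lem:Lagrangian}), hence $j-j'\in L_\alpha\cap L_\beta$; and since $[K]\in\ker(\iota_M)=L_\alpha+L_\beta$ by your own identification, writing $[K]=u+v$ with $u\in L_\alpha$ and $v\in L_\beta$ gives $\langle j-j',[K]\rangle_\Sigma=\langle j-j',u\rangle_\Sigma+\langle j-j',v\rangle_\Sigma=0$ because $j-j'$ lies in both Lagrangians. (The paper reaches the same conclusion by invoking Claim (3) of Lemma \ref{lem:3dHomology}.) With that paragraph added, your proof is complete.
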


\begin{proof}
Let $J'$ denote the result of pushing $J$ slightly into the interior of $W_\beta,$ so that $J'$ and $J$ cobound an embedded union of annuli in $W_\beta.$  Let $F\subset M$ be a Seifert surface for $J'$ that is transverse to $\Sigma$, and set $j=[F\cap \Sigma]$, oriented as the boundary of $F\cap W_\alpha$ as in Lemma \ref{lem:3dHomology}.  Then $j\in L_\alpha$ since it bounds the $2$-chain $[F\cap W_\alpha]$, and since $\iota_\beta([J])=[J']=\iota_\beta(j),$ we have $[J]-j\in  L_\beta.$  Now by Lemma \ref{lem:Lagrangian}, it follows that $\langle [J]-j,x\rangle_\Sigma=\langle [J],x\rangle_\Sigma-\langle j, x\rangle_\Sigma =0$ for all $x\in  L_\beta$, which proves Claim (1).

To prove Claim (2), first notice that for any null-homologous $K$, the $j$ from the previous paragraph
	satisfies
	\[
    	\textnormal{lk}(J,K)
        = \langle [F],[K]\rangle_{(M,J)}
        =\langle [F\cap \Sigma],[K]\rangle_\Sigma
        = \langle j,[K]\rangle_\Sigma,
	\]
where $\langle\cdot,\cdot\rangle_{(M,J)}$ is the intersection pairing on the relative homology
$H_1(M,J) \times H_2(M,J)$.
If $j'\in L_\alpha$ is any other element which satisfies $\langle j',x\rangle_\Sigma=\langle [J],x\rangle_\Sigma$ for all $x\in  L_\beta$, then we have $\langle j-j',x\rangle_\Sigma=0$ for all $x\in L_\beta$.  By Lemma \ref{lem:Lagrangian} it follows that $j-j'\in L_\beta$, and since $j-j'\in L_\alpha$ we have $j-j'\in L_\alpha\cap L_\beta.$  By Claim (3) of Lemma \ref{lem:3dHomology},
	$\langle j-j',[K]\rangle_\Sigma=0$ for any embedded union of curves $K\subset \Sigma \setminus J$ which is null-homologous in $M$. This proves Claim (2).
\end{proof}


\section{The homology and intersection form of a trisected 4-manifold}

In this section, let $(\Sigma;V_1,V_2,V_3)$ denote a $(g;k_1,k_2,k_3)$-trisection of the $4$-manifold $X$, let $W_\alpha=V_1\cap V_2$, $W_\beta=V_1\cap V_3,$ and $W_\gamma=V_2\cap V_3$, and let $L_\epsilon=\ker(\iota_\epsilon)$, where $\iota_\epsilon: H_1(\Sigma)\rightarrow H_1(W_\epsilon)$ is the map induced by inclusion, $\epsilon=\alpha,\beta,\gamma$.  As discussed in the previous section, the groups $L_\epsilon$ are maximal Lagrangian subgroups of $H_1(\Sigma)$ generated by any choice of oriented defining curves for $W_\epsilon$.  We now show how the homology and intersection form of $X$ can be calculated in terms of the intersection form on $H_1(\Sigma)$ and the subgroups $L_\epsilon.$

 \begin{theorem}\label{thm:4dHomology}

 The homology of $X$ can be obtained from the following chain complex:
\[ 
\begin{tikzcd}
	0 \rightarrow
    	\Z  \ar[r, "0"]
    & (L_\alpha \cap L_\gamma) \oplus (L_\beta \cap L_\gamma) \ar[r, "\partial_3"]
    & L_\gamma \ar[r, "\partial_2"]
    & \textnormal{Hom}(L_\alpha\cap L_\beta,\Z) \xrightarrow{0}
    \Z \rightarrow 0
\end{tikzcd}
\]
where $\partial_3(x,y)=x+y$ and $\partial_2(x)=\langle \cdot,x\rangle_\Sigma$.
In fact, this chain complex is isomorphic to the cellular chain complex of $X$ obtained from a
	certain handle decomposition of $X$.
\end{theorem}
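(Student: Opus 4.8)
The plan is to build an explicit handle decomposition of $X$ from the trisection data and then identify its cellular chain complex with the complex in the statement. Recall the standard description of a trisection: the $4$-manifold $X$ is assembled from the three pieces $V_i$, each diffeomorphic to $\natural^{k_i}(S^1\times D^3)$, glued along the $3$-dimensional handlebodies $W_\alpha, W_\beta, W_\gamma$, with $\Sigma$ the common spine surface. I would use the well-known handle decomposition of a trisected $4$-manifold that has: one $0$-handle; $k_1$ one-handles (from $V_1$); $g$ two-handles attached along the $\beta$-curves, suitably framed; $g-k_3$... more precisely, a collection of $2$-handles dual to the $\gamma$-curves and $3$-handles corresponding to the second handlebody relation; capped off with appropriate $3$- and $4$-handles. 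Concretely, one standard way to see $X$ is: take the $4$-dimensional handlebody $Z_{12} = V_1 \cup_{W_\alpha} V_2$, a standard piece whose boundary is $\#^{k_1+k_2-\text{something}}(S^1\times S^2)$ — actually $\partial Z_{12}$ is the Heegaard-split $3$-manifold $(\Sigma; W_\alpha, W_\gamma)$ — then attach $2$-handles along the $\beta$-curves (which are the attaching circles for $V_3$'s $2$-handle part), then attach a $3$-handle and a $4$-handle. The key inputs are Lemma~\ref{lem:Lagrangian} (identifying each $L_\epsilon$ with the span of defining curves) and Lemma~\ref{lem:3dHomology} and Lemma~\ref{lem:LinkingNumber}, which will compute the boundary maps.

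The key steps, in order: (1) Fix such a handle decomposition $X = h^0 \cup h^1_{\cdot} \cup h^2_{\cdot} \cup h^3_{\cdot} \cup h^4$ and count handles in each dimension, matching the ranks: the chain group $C_4 = \Z$, $C_3$ should have rank $\mathrm{rk}(L_\alpha\cap L_\gamma) + \mathrm{rk}(L_\beta\cap L_\gamma)$, $C_2$ should have rank $\mathrm{rk}(L_\gamma) = g$, $C_1$ should have rank $\mathrm{rk}\,\mathrm{Hom}(L_\alpha\cap L_\beta,\Z)$, and $C_0 = \Z$; this rank bookkeeping uses that $(\Sigma;W_\alpha,W_\beta)$, $(\Sigma;W_\beta,W_\gamma)$, and $(\Sigma;W_\alpha,W_\gamma)$ are Heegaard splittings of $\#^{k_i}(S^1\times S^2)$, so by Lemma~\ref{lem:3dHomology}(1) the relevant intersections $L_\epsilon\cap L_{\epsilon'}$ have rank equal to $k_i$, and $g$-dimensional things work out. (2) Identify $C_1$ with $\mathrm{Hom}(L_\alpha\cap L_\beta,\Z)$ by identifying the $1$-handles of $V_1$ with a basis dual to the $\alpha$-curves-mod-$\beta$, and identify the $\partial_2$ map: the attaching circles of the $2$-handles are (isotopic to) the $\gamma$-curves (or $\beta$-curves, depending on the chosen convention), and their images in $C_1 = H_1(h^0\cup h^1)$ are computed by algebraic intersection with the belt spheres of the $1$-handles; Lemma~\ref{lem:3dHomology}(2) or the Mayer-Vietoris description of $\partial_*$ shows this is exactly pairing with the intersection form $\langle\cdot, x\rangle_\Sigma$, giving $\partial_2(x) = \langle\cdot,x\rangle_\Sigma$. (3) Identify $C_3$ and $\partial_3$: the $3$-handles are dual to $1$-handles of the "upside-down" piece, and their attaching $2$-spheres are belt spheres; the boundary map $\partial_3$ sends a $3$-handle to the $2$-handles it runs over algebraically, which — after identifying the $2$-handle chain group $C_2$ with $L_\gamma$ via Lemma~\ref{lem:Lagrangian} — becomes the inclusion-induced map $(L_\alpha\cap L_\gamma)\oplus(L_\beta\cap L_\gamma) \to L_\gamma$, $(x,y)\mapsto x+y$. (4) Check $\partial_4 = 0$ and the left $0$ map: these are forced by the rank count and the fact that $X$ is closed and connected, or can be read off directly from the handle structure (the $4$-handle's attaching sphere is a $3$-sphere that is null-homologous in the $3$-skeleton's boundary).

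I expect the main obstacle to be pinning down the handle decomposition precisely enough to make the boundary-map identifications \emph{on the nose} (as an isomorphism of chain complexes, not merely a quasi-isomorphism), and in particular getting the identification $C_2 \cong L_\gamma$ compatible simultaneously with both $\partial_2$ and $\partial_3$. The subtlety is that the $2$-handles come from two different ``sides'' of the trisection — some are genuine $2$-handles of $V_3$ attached along $\gamma$-curves, and the count $g$ forces us to use \emph{all} of $L_\gamma$, not just a defining set — so one must argue that after handle slides the attaching data of the $2$-handles can be taken to be an arbitrary basis of $L_\gamma$, and that the orientation/sign conventions (the ``outward normal first'' convention flagged in Lemma~\ref{lem:3dHomology}) are consistent between $\partial_2$ and $\partial_3$. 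Once the decomposition is fixed, verifying $\partial_2\circ\partial_3 = 0$ provides a useful consistency check: $\partial_2(\partial_3(x,y)) = \langle\cdot, x+y\rangle_\Sigma$ restricted to $L_\alpha\cap L_\beta$ must vanish, which holds because $x\in L_\gamma\supseteq$-ish interacts with $L_\alpha\cap L_\beta$ trivially — concretely $x \in L_\alpha\cap L_\gamma$ or $L_\beta\cap L_\gamma$ pairs trivially with $L_\alpha\cap L_\beta$ by Lemma~\ref{lem:Lagrangian} applied to the Lagrangian containing both. The remaining work is routine linear algebra and diagram-chasing once the geometric picture is nailed down.
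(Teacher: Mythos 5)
Your proposal follows essentially the same route as the paper: a $0$-handle and $k_1$ $1$-handles filling $V_1$, then $g$ $2$-handles attached along a defining set of curves for $W_\gamma$ (which by Lemma~\ref{lem:Lagrangian} is automatically a basis of $L_\gamma$, so your worry about needing ``all of $L_\gamma$, not just a defining set'' dissolves), then $k_2+k_3$ $3$-handles in $V_2$ and $V_3$ and a single $4$-handle, with the chain groups and boundary maps identified via belt and attaching spheres together with the Mayer--Vietoris maps of Lemma~\ref{lem:3dHomology}. The one step you explicitly leave open --- that $\hat{\partial}_3$ becomes $(x,y)\mapsto x+y$ under these identifications --- is precisely where the paper does its only substantive geometric work, isotoping each $3$-handle attaching sphere by an innermost/outermost disk argument so that it meets $W_\gamma$ only in disks parallel to the $2$-handle cores.
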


\begin{rmk}\label{rmk:4dHomology}
Alternatively, the homology of $X$ can be obtained from the following chain complex, which occurs in the introduction:
\begin{align}
	\label{eq:4dHomology}
    0
   	\rightarrow \Z
        \xrightarrow{0} L_\alpha\cap L_\gamma
        \xrightarrow{\partial_3} \frac{L_\gamma}{L_\beta\cap L_\gamma}
        \xrightarrow{\partial_2} \textnormal{Hom}(L_\alpha\cap L_\beta,\Z)
        \xrightarrow{0} \Z
        \rightarrow 0,
\end{align}
where $\partial_3$ is induced by inclusion and $\partial_2$ is the factorization through
	$\frac{L_\gamma}{L_\beta\cap L_\gamma}$ of the corresponding differential
    $\partial_2$ in Theorem~\ref{thm:4dHomology}.
Indeed, noting that $\frac{L_\gamma}{L_\beta\cap L_\gamma}$ can be (non-canonically) identified with a direct summand in
$L_\gamma$ such that $L_\gamma=L_\beta\cap L_\gamma\oplus\frac{L_\gamma}{L_\beta\cap L_\gamma}$, one can show that the two chain complexes are chain homotopy equivalent.

\end{rmk}
Below, Theorem~\ref{thm:4dHomology} is established from a  handle decomposition of $X$ adapted to pieces of its trisection which is built directly from the definition of a trisection. Alternatively, one can check that if a trisection comes from a Heegaard-Kirby diagram for $X$ (as described in \cite[Proposition~4.2]{MeierSchirmerZupan}), then the $CW$-homology chain complex obtained from the resulting handle decomposition is isomorphic to the complex in~\eqref{eq:4dHomology}.

\begin{proof}[Proof of Theorem~\ref{thm:4dHomology}]
Given a handle decomposition of $X$, let $X^i\subset X$ denote the union of all handles of index less than or equal to $i$.  We build a handle decomposition of $X$ as follows: start with a single $0$-handle and $k_1$ $1$-handles in $V_1$, so that $X^1=V_1$.  Attach $g$ $2$-handles whose core disks form a complete collection of disks for the handlebody $H_\gamma$, so that $X^2$ is the union of $X^1$ and regular neighborhood of these disks.  The remainder $\overline{X\setminus X^2}$ is diffeomorphic to $V_2\natural V_3$.  Choose $(k_2+k_3)$ $3$-handles, $k_2$ of which lie in $V_2$ and $k_3$ of which lie in $V_3$, so that $\overline{X\setminus X^3}$ is a 4-ball that forms the single $4$-handle of the decomposition.

Recall that the homology of any smooth manifold $X$ with a handle decomposition can be calculated from the handle complex
\[
	\cdots\xrightarrow{\hat{\partial}_{i+1}} \mathcal{C}_i
    	\xrightarrow{\hat{\partial}_i} \mathcal{C}_{i-1}
        \xrightarrow{\hat{\partial}_{i-1}} \mathcal{C}_{i-2}
        \xrightarrow{\hat{\partial}_{i-2}}\cdots,
\]
where $\mathcal{C}_i$ is the free abelian group generated by the $i$-handles of the decomposition.  The
boundary map $\hat{\partial}_i$ is defined on each $i$-handle $h$ by
\begin{equation}\label{eq:HandleBoundaryMap}
	\hat{\partial}_i(h)=\sum_j\tau_jh_j,
\end{equation}
where the sum is taken over all $(i-1)$-handles $h_j$, and $\tau_j$ is the algebraic intersection number of the attaching sphere of $h$ with the belt sphere of $h_j$ in $\partial X^{i-1}$ \cite{Kosinski}.
Because our manifold is orientable and there is only one $0$-handle and one $4$-handle,
	we have $\mathcal{C}_0=\mathcal{C}_4=\Z$ and $\partial_1=0$, $\partial_4 = 0$. Thus we will focus on the nontrivial part of the sequence:
\begin{equation}
	\label{eq:HandleComplex}
	0 \rightarrow \mathcal{C}_3
    \xrightarrow{\hat{\partial}_3} \mathcal{C}_2
    \xrightarrow{\hat{\partial}_2} \mathcal{C}_1
    \rightarrow 0.
\end{equation}

Let $b_1:\mathcal{C}_1\rightarrow H_2(\partial V_1)$ be the isomorphism which sends each $1$-handle generator of $\mathcal{C}_1$ to the element of $H_2(\partial V_1)$ represented by its belt sphere, and let $a_2:\mathcal{C}_2\rightarrow L_\gamma$ be the isomorphism which sends each $2$-handle generator of $\mathcal{C}_2$ to the element of $L_\gamma\subset H_1(\Sigma)$ that is represented by its attaching sphere (which by construction is embedded in $\Sigma$).  In this notation, (\ref{eq:HandleBoundaryMap}) takes the following form:
  \begin{equation}
      \hat{\partial}_2(h)=\sum_j \langle b_1(h_j),\iota_{\partial V_1} a_2(h)\rangle_{\partial V_1} h_j.
  \end{equation}
The map $\iota_{\partial V_1}:H_1(\Sigma)\rightarrow H_1(\partial V_1)$ is induced by inclusion.
By Lemma \ref{lem:3dHomology}, this equation can in turn be replaced by
  \begin{equation}
  	\hat{\partial}_2(h)=\sum_j\langle \partial b_1(h_j),a_2(h)\rangle_\Sigma \, h_j.
  \end{equation}
Thus if $\partial_*$ is the Mayer-Vietoris map of Lemma \ref{lem:3dHomology}, $D:L_\alpha\cap L_\beta\rightarrow \textnormal{Hom}(L_\alpha\cap L_\beta,\Z)$ is the dual map induced by the basis elements $\partial_*b_1(h_j)$, and we define $f_1=D\circ\partial \circ b_1$ and $f_2=a_2$, then we obtain
  \begin{equation}\label{eq:RightHalf}
  	\partial_2\circ f_2=f_1\circ \hat{\partial}_2.
  \end{equation}

By construction, $\mathcal{C}_3$ splits as $\mathcal{C}_3^2\oplus\mathcal{C}_3^3$, where $\mathcal{C}_3^2$ is the free abelian group generated by the $3$-handles defining $V_2$ and $\mathcal{C}_3^3$ is its analogue for $V_3.$  Thus we obtain an isomorphism $a_3:\mathcal{C}_3\rightarrow H_2(\partial V_2)\oplus H_2(\partial V_3)$ which maps each $3$-handle generator of $\mathcal{C}_3^2$ to the element of $H_2(\partial V_2)$ represented by its attaching sphere in $\partial V_2$, and likewise for the generators of $\mathcal{C}_3^3$ and $H_2(\partial V_3)$.  Let $f_3:\mathcal{C}_3\rightarrow (L_\alpha\cap L_\gamma)\oplus (L_\beta\cap L_\gamma)$ be the map obtained by composing $a_3$ with the sum of the $\partial_*$ maps $H_2(\partial V_2)\rightarrow L_\alpha\cap L_\gamma$ and $H_2(\partial V_3)\rightarrow L_\beta\cap L_\gamma$ from Lemma \ref{lem:3dHomology}.  We now prove that
\begin{equation}\label{eq:LeftHalf}
\partial_3\circ f_3 = f_2\circ \hat{\partial}_3.
\end{equation}
This, together with \eqref{eq:RightHalf}, will show chain complex in \eqref{eq:HandleComplex} is isomorphic to (the middle part of) the complex in the statement of the theorem, which will complete the proof.

Recall that we chose $2$-handles for the handle decomposition of $X$ whose cores form a defining collection of disks for $W_\gamma$.  Thus every sphere $S\subset \partial V_2$ (or $\partial V_3$) is isotopic to a sphere which intersects $W_\gamma$ only in disks that are parallel to the core disks of the $2$-handles. (This follows from a standard innermost and outermost disk argument.) If a component $E$ of $S\cap W_\gamma$ is isotopic in $W_\gamma$ to the core of the $2$-handle $h_j$, then it is disjoint from the belt sphere of all other $2$-handles, and meets the belt sphere of $h_j$ in a single intersection point.  Moreover, this intersection is positive if the orientation of $E$ matches that of the core of $h_j$, and negative otherwise.  Thus if $h$ is a $3$-handle of the decomposition, then in \eqref{eq:HandleBoundaryMap} above $\tau_j$ counts (with sign) the number of parallel copies of the core of $h_j$ that lie in $S\cap W_\gamma$, where $S$ is the attaching sphere of $h$.  It follows that, for every generator $h\in \mathcal{C}_3$, $f_2\circ\partial_3(h)$ is the element of $H_1(\Sigma)$ represented by $S\cap \Sigma$.
The same description applies to $\partial_3\circ f_3(h)$, so we have shown that \eqref{eq:LeftHalf} holds.
\end{proof}

The chain complex of Theorem \ref{thm:4dHomology} allows us to find presentation matrices for $H_1(X)$, but aside from this it does not appear to yield any particularly elegant characterizations of $H_1(X)$.
However, we do obtain the following descriptions of $H_2(X)$ and $H_3(X)$ which are symmetric with respect to the $\alpha$, $\beta$, and $\gamma$ curves.

\begin{corollary}\label{cor:H2Formula}
Writing $\{a+b+c=0\}$ for $\{(a,b,c)\in L_\alpha\times L_\beta\times L_\gamma|a+b+c=0\},$ we have
\begin{equation}\label{eq:symH2Formula}H_2(X)\cong \frac{\{a+b+c=0\}}{\{ c=0\}+\{b=0\}+\{a=0\}},\end{equation} where the summands in the denominators are understood to be the subgroups of $\{a+b+c=0\}$ satisfying the given linear equations.
\end{corollary}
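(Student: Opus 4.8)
The plan is to deduce this from Theorem~\ref{thm:4dHomology} by computing $H_2$ of the chain complex given there and then exhibiting an isomorphism with the symmetric group in~\eqref{eq:symH2Formula}. Recall that in that complex $H_2(X) = \ker(\partial_2)/\operatorname{im}(\partial_3)$, where $\partial_2\colon L_\gamma \to \operatorname{Hom}(L_\alpha\cap L_\beta,\Z)$ sends $x\mapsto\langle\cdot,x\rangle_\Sigma$ and $\partial_3\colon (L_\alpha\cap L_\gamma)\oplus(L_\beta\cap L_\gamma)\to L_\gamma$ sends $(x,y)\mapsto x+y$. So the first step is to identify $\ker(\partial_2)$: an element $x\in L_\gamma$ lies in the kernel iff $\langle y,x\rangle_\Sigma=0$ for all $y\in L_\alpha\cap L_\beta$. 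Applying Claim~(3) of Lemma~\ref{lem:3dHomology} to the Heegaard splitting $(\Sigma;W_\alpha,W_\beta)$ of the 3-manifold $M_{\alpha\beta}=V_1$'s boundary piece (where $H_1$ is torsion-free, being $\#^{k}(S^1\times S^2)$), this says exactly that $\iota_M x = 0$, i.e. $x\in\ker(\iota_\alpha+\iota_\beta$-into-$M)$. The cleanest way to package this: $x\in\ker\partial_2$ iff $x$ can be written as $x = -a - b$ with $a\in L_\alpha$, $b\in L_\beta$; equivalently $\ker\partial_2 = (L_\alpha + L_\beta)\cap L_\gamma$. I would verify this identity carefully — one inclusion is immediate since $L_\alpha, L_\beta$ are Lagrangian, and the reverse is the content of Lemma~\ref{lem:3dHomology}(3) together with Lemma~\ref{lem:Lagrangian}.

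Next I would construct the map. Define $\varphi\colon \{a+b+c=0\}\to L_\gamma$ by $\varphi(a,b,c) = c$. Its image is precisely $(L_\alpha+L_\beta)\cap L_\gamma = \ker\partial_2$, since $c\in L_\gamma$ extends to a triple $(a,b,c)$ with $a+b+c=0$ exactly when $-c\in L_\alpha+L_\beta$. I then claim $\varphi$ descends to an isomorphism from $\{a+b+c=0\}/(\{a=0\}+\{b=0\}+\{c=0\})$ onto $\ker(\partial_2)/\operatorname{im}(\partial_3)$. For this I must check two things. First, $\varphi$ carries $\{a=0\}+\{b=0\}+\{c=0\}$ onto $\operatorname{im}(\partial_3)$: the subgroup $\{c=0\}$ maps to $0$; the subgroup $\{b=0\}$ consists of triples $(a,0,-a)$ with $a\in L_\alpha$ and $-a=c\in L_\gamma$, i.e. $a\in L_\alpha\cap L_\gamma$, which maps to $\{-a : a\in L_\alpha\cap L_\gamma\} = L_\alpha\cap L_\gamma$; similarly $\{a=0\}$ maps onto $L_\beta\cap L_\gamma$. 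The sum of these images in $L_\gamma$ is exactly $(L_\alpha\cap L_\gamma)+(L_\beta\cap L_\gamma) = \operatorname{im}(\partial_3)$. Second, I must identify $\ker\varphi$ and check it lands inside the denominator: if $c=0$ then $a+b=0$ with $a\in L_\alpha$, $b\in L_\beta$, so $a = -b\in L_\alpha\cap L_\beta$, and $(a,-a,0)$ lies in $\{c=0\}$, which is one of the denominator summands. Hence the induced map on quotients is injective, and surjectivity is clear from $\operatorname{im}(\varphi)=\ker\partial_2$.

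The main obstacle I anticipate is the identity $\ker(\partial_2) = (L_\alpha+L_\beta)\cap L_\gamma$ — specifically the inclusion $\ker\partial_2 \subseteq (L_\alpha+L_\beta)$. The naive hope that $\langle y,x\rangle_\Sigma = 0$ for all $y\in L_\alpha\cap L_\beta$ forces $x$ into $L_\alpha + L_\beta$ is a statement about the symplectic $\Z$-module $H_1(\Sigma)$ that is \emph{not} purely formal: it fails in general for a non-split symplectic lattice with arbitrary Lagrangians, and it genuinely uses that $L_\alpha$ and $L_\beta$ arise from a Heegaard splitting of a 3-manifold with torsion-free $H_1$ (so that $L_\alpha+L_\beta$ is a direct summand and the relevant Poincaré duality argument in Lemma~\ref{lem:3dHomology}(3) applies). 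So I would lean on Lemma~\ref{lem:3dHomology}(3) rather than attempt a self-contained lattice-theoretic argument: that lemma tells us $(L_\alpha\cap L_\beta)^{\perp} = \ker(\iota_M)$ where $\iota_M\colon H_1(\Sigma)\to H_1(M_{\alpha\beta})$, and then exactness of Mayer--Vietoris gives $\ker(\iota_M) = \operatorname{im}\big(H_1(W_\alpha\cap W_\beta)\to\cdots\big)$ — but more directly, $\ker(\iota_M)=L_\alpha+L_\beta$ follows since $H_1(M_{\alpha\beta}) = H_1(\Sigma)/(L_\alpha+L_\beta)$ by van Kampen/Mayer--Vietoris for a Heegaard splitting. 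Once that identity is in hand the rest is the bookkeeping above, all of which is routine linear algebra over $\Z$. A final remark worth including: the symmetry of~\eqref{eq:symH2Formula} under permuting $\alpha,\beta,\gamma$ is a nontrivial consistency check, and it reflects the fact that one could equally have built the handle decomposition in Theorem~\ref{thm:4dHomology} starting from $V_2$ or $V_3$.
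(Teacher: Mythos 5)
Your proposal is correct and follows essentially the same route as the paper: compute $\ker\partial_2 = L_\gamma\cap(L_\alpha+L_\beta)$ and $\operatorname{im}\partial_3 = (L_\gamma\cap L_\alpha)+(L_\gamma\cap L_\beta)$ from Theorem~\ref{thm:4dHomology}, then pass to the symmetric presentation via the map $(a,b,c)\mapsto c$ (the paper routes this through $\{a+b-c=0\}$ and a sign change $c\mapsto -c$, which is purely cosmetic). You are in fact slightly more careful than the paper at one step: the paper deduces $(L_\alpha\cap L_\beta)^\perp=L_\alpha+L_\beta$ directly from $L_\alpha^\perp=L_\alpha$ and $L_\beta^\perp=L_\beta$, whereas you correctly observe that this is not a formal identity for Lagrangian sublattices and instead ground it in Lemma~\ref{lem:3dHomology}(3) together with $\ker\iota_M=L_\alpha+L_\beta$ for the Heegaard splitting of $\partial V_1\cong\#^{k_1}(S^1\times S^2)$.
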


\begin{proof}
First we prove that
\begin{equation}\label{eq:H2FormulaNonSymmetric}
H_2(X)\cong \frac{L_\gamma\cap(L_\alpha+L_\beta)}{(L_\gamma\cap L_\alpha)+(L_\gamma\cap L_\beta)}.
\end{equation}
If $\partial_2$ and $\partial_3$ are the maps from the complex in Theorem \ref{thm:4dHomology}, then $\ker({\partial}_2)$ consists of those elements of $L_\gamma$ which are orthogonal to $L_\alpha\cap L_\gamma$ in $H_1(\Sigma)$.  By Lemma \ref{lem:Lagrangian}, $L_\alpha^{\perp}=L_\alpha$ and $L_\beta^{\perp}=L_\beta,$ so $(L_\alpha\cap L_\beta)^\perp=L_\alpha+L_\beta$.  Hence $\ker({\partial}_2)=L_\gamma\cap (L_\alpha+L_\beta).$  Additionally, one can readily see that
$\textnormal{Im}(\partial_3)=(L_\gamma\cap L_\alpha)+(L_\gamma\cap L_\beta)$, so
\eqref{eq:H2FormulaNonSymmetric} now follows from Theorem \ref{thm:4dHomology}.

To prove the symmetric formula holds, first note that $(a,b,c)\mapsto c$ induces
\[
	\frac{\{(a,b,c)\in L_\alpha\times L_\beta\times L_\gamma|a+b-c=0\}}{\{a+b=0\}}
    	\cong L_\gamma\cap(L_\alpha+L_\beta),
\]
and so
\[
	\frac{L_\gamma\cap(L_\alpha+L_\beta)}{(L_\gamma\cap L_\alpha)+(L_\gamma\cap L_\beta)}
    	\cong \frac{\{(a,b,c)\in L_\alpha\times L_\beta\times L_\gamma|a+b-c=0\}}
        			{\{a+b=0\}+\{a-c=0\}+\{b-c=0\}}.
\]
The isomorphism $H_1(\Sigma)^3\to H_1(\Sigma)^3$ induced by $(a,b,c)\mapsto(a,b,-c)$ induces an
isomorphism with
\[
	\frac{\{(a,b,c)\in L_\alpha\times L_\beta\times L_\gamma|a+b+c=0\}}{\{a+b=0\}+\{a+c=0\}+\{b+c=0\}}
\]
which is equal to the desired expression.
\end{proof}

\begin{corollary}\label{cor:H3Formula}
	$H_3(X)\cong L_\alpha\cap L_\beta\cap L_\gamma.$
\end{corollary}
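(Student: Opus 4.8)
The plan is to read off $H_3(X)$ from the chain complex of Theorem~\ref{thm:4dHomology}, just as Corollary~\ref{cor:H2Formula} extracts $H_2(X)$ from the same complex. In that complex the relevant portion is
\[
\Z \xrightarrow{\;0\;} (L_\alpha\cap L_\gamma)\oplus(L_\beta\cap L_\gamma) \xrightarrow{\;\partial_3\;} L_\gamma,
\]
with $\partial_3(x,y)=x+y$. Since the map into $(L_\alpha\cap L_\gamma)\oplus(L_\beta\cap L_\gamma)$ from $\mathcal{C}_4=\Z$ is zero, we have $H_3(X)=\ker(\partial_3)$. An element $(x,y)$ with $x\in L_\alpha\cap L_\gamma$, $y\in L_\beta\cap L_\gamma$ lies in $\ker(\partial_3)$ precisely when $x=-y$; this common element then lies in $(L_\alpha\cap L_\gamma)\cap(L_\beta\cap L_\gamma)=L_\alpha\cap L_\beta\cap L_\gamma$. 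So the first step is to observe that $(x,y)\mapsto x$ (equivalently $(x,-x)\mapsfrom x$) gives an isomorphism $\ker(\partial_3)\xrightarrow{\;\sim\;}L_\alpha\cap L_\beta\cap L_\gamma$.

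The second step is a sanity check on whether any boundaries need to be quotiented out: they do not, because $\mathcal{C}_4=\Z$ maps in by the zero map (indeed $X$ is closed orientable with a single $4$-handle, so $\hat\partial_4=0$), hence $H_3(X)=\ker(\partial_3)/0=\ker(\partial_3)$. Combining the two steps yields $H_3(X)\cong L_\alpha\cap L_\beta\cap L_\gamma$, as claimed.

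There is essentially no obstacle here; the only thing to be slightly careful about is the bookkeeping with signs and the direct-sum decomposition of $\mathcal{C}_3$, but Theorem~\ref{thm:4dHomology} has already packaged all of that. If one prefers a presentation symmetric in $\alpha,\beta,\gamma$ — to match the style of Corollary~\ref{cor:H2Formula} — one can alternatively note that in the symmetric model $\{a+b+c=0\}\subset L_\alpha\times L_\beta\times L_\gamma$ the subgroup $H_3(X)$ corresponds to $\{a=0\}\cap\{b=0\}\cap\{c=0\}$, which forces $a=b=c=0$ and so is again identified with $L_\alpha\cap L_\beta\cap L_\gamma$; but the direct computation from the chain complex is the cleanest route and is the one I would write up.
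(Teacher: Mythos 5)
Your proposal is correct and follows the paper's proof essentially verbatim: identify $H_3(X)$ with $\ker(\partial_3)$ from the complex of Theorem~\ref{thm:4dHomology}, observe that $\partial_3(x,y)=0$ forces $x=-y$ with $x\in L_\alpha\cap L_\beta\cap L_\gamma$, and conclude that $x\mapsto(x,-x)$ is an isomorphism. Nothing further is needed.
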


\begin{proof}
From Theorem \ref{thm:4dHomology} we have $H_3(X)\cong \ker({\partial}_3)$.
	Suppose that $(x,y) \in (L_\alpha \cap L_\gamma) \oplus (L_\beta \cap L_\gamma)$
    and ${\partial}_3(x,y)=0$. Then $x=-y$ and, since $y\in L_\beta\cap L_\gamma$, we have $x\in L_\beta$.  Thus the map $f:L_\alpha\cap L_\beta\cap L_\gamma\rightarrow \ker({\partial}_3)$ given by $f(x)=(x,-x)$ is an isomorphism.
\end{proof}

\begin{definition}\label{def:FormDefinition}
Let $\Phi:(L_\gamma\cap(L_\alpha+L_\beta))\times(L_\gamma\cap(L_\alpha+L_\beta))\rightarrow \Z$ be given by the formula
\begin{equation}
\Phi(x,y)=\langle x',y\rangle_\Sigma,
\end{equation}
where $x'$ is any element of $L_\alpha$ which satisfies $x-x'\in L_\beta$.
\end{definition}

The element $x'$ above exists since $x\in L_\alpha+L_\beta$, and although it will not generally be unique, the value of $\Phi(x,y)$ does not depend on the choice of $x'$. The proof of this is formally identical to the last paragraph of the proof of Lemma \ref{lem:LinkingNumber}.

\begin{theorem}\label{thm:IntersectionForm}
The group
\[
	\frac{L_\gamma\cap(L_\alpha+L_\beta)}{(L_\gamma\cap L_\alpha)+(L_\gamma\cap L_\beta)}
\]
together with the form induced by $\Phi$ 
is isomorphic to $H_2(X)$ together with the intersection form.
\end{theorem}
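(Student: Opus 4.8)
The plan is to show that the isomorphism $H_2(X)\cong\frac{L_\gamma\cap(L_\alpha+L_\beta)}{(L_\gamma\cap L_\alpha)+(L_\gamma\cap L_\beta)}$ already produced in the proof of Corollary~\ref{cor:H2Formula} carries the intersection form to the form induced by $\Phi$, by computing the intersection form geometrically on the handle decomposition of $X$ used in the proof of Theorem~\ref{thm:4dHomology}. Recall from that proof that $H_2(X)\cong\ker\hat\partial_2/\textnormal{Im}\,\hat\partial_3$, and that the isomorphism $a_2\colon\mathcal{C}_2\to L_\gamma$ (which sends a $2$-handle to the class in $\Sigma$ of its attaching circle) identifies $\ker\hat\partial_2$ with $L_\gamma\cap(L_\alpha+L_\beta)$ and $\textnormal{Im}\,\hat\partial_3$ with $(L_\gamma\cap L_\alpha)+(L_\gamma\cap L_\beta)$. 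So it suffices to prove that if $c=\sum n_i h_i$ and $c'=\sum m_j h_j$ are $2$-cycles in $\mathcal{C}_2$ and $x=a_2(c)$, $y=a_2(c')$ lie in $L_\gamma\cap(L_\alpha+L_\beta)$, then $\langle[c],[c']\rangle_X=\Phi(x,y)$. As a preliminary I would check, directly from Definition~\ref{def:FormDefinition} and the fact that $L_\alpha,L_\beta,L_\gamma$ are Lagrangian, that $\Phi(x,y)$ vanishes whenever $x$ or $y$ lies in $(L_\gamma\cap L_\alpha)+(L_\gamma\cap L_\beta)$ (for instance, if $y\in L_\gamma\cap L_\alpha\subset L_\alpha$ then $\langle x',y\rangle_\Sigma=0$ since $x'\in L_\alpha$, and if $y\in L_\gamma\cap L_\beta$ then $\langle x',y\rangle_\Sigma=\langle x,y\rangle_\Sigma=0$ since $x,y\in L_\gamma$), so that $\Phi$ really does descend to the quotient; this is routine.

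Next I would build explicit embedded surfaces representing $[c]$ and $[c']$. The $2$-handles of the decomposition are attached along a defining system of curves for $W_\gamma$ lying on $\Sigma\subset\partial V_1$, with the framing induced by $\Sigma$. Weighting the cores of these $2$-handles by the coefficients of $c$ gives a $2$-chain in $X^2$ whose boundary is a link $J\subset\Sigma$ representing $x$; since $x\in L_\gamma\cap(L_\alpha+L_\beta)=(L_\alpha\cap L_\beta)^\perp$, Lemma~\ref{lem:3dHomology}(3) (applicable because $H_1(\partial V_1)=\Z^{k_1}$ is torsion free) shows $J$ is null-homologous in $\partial V_1=\partial X^1$, and since $H_1(\partial V_1)\to H_1(V_1)$ is an isomorphism $J$ bounds a surface $F_J$ in $V_1=X^1$. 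Capping off the weighted cores with $F_J$ produces a closed surface $\Sigma_c\subset X^2\subset X$ representing $[c]$ (this is the standard way the handle chain complex realizes homology classes), and similarly a surface $\Sigma_{c'}$ with boundary link $K\subset\Sigma$ representing $y$ and capping surface $F_K\subset V_1$.

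The heart of the matter is to identify $\langle\Sigma_c,\Sigma_{c'}\rangle_X$ with a generalized linking number in $\partial V_1$. I would perturb $\Sigma_{c'}$ by pushing its attaching link $K$ off $\Sigma$ into the interior of the handlebody $W_\beta$ (which makes it disjoint from $J$, and which realizes precisely the $\Sigma$-framing, matching the framing used to attach the $2$-handles) and pushing its capping surface into the interior of $V_1$; then the $2$-handle cores of $c$ and of $c'$ become disjoint, and each capping surface becomes disjoint from the cores of the opposite cycle, so every intersection point of $\Sigma_c$ with the perturbed $\Sigma_{c'}$ lies in $V_1$ and is accounted for by the $4$-dimensional intersection of the two capping surfaces there. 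Because $H_2(V_1)=0$, this intersection number depends only on the null-homologous links $J$ and $K$ in $\partial V_1$ and equals $\textnormal{lk}_{\partial V_1}(J,K)$ in the sense of Definition~\ref{def:LinkingNumber} (choosing $F_J$ to be a pushed-in Seifert surface for $J$ in $\partial V_1$ makes this transparent). Finally, Lemma~\ref{lem:LinkingNumber} gives $\textnormal{lk}_{\partial V_1}(J,K)=\langle x',[K]\rangle_\Sigma=\langle x',y\rangle_\Sigma$, where $x'\in L_\alpha$ is any element with $x-x'\in L_\beta$ (the element called $j$ in that lemma); by Definition~\ref{def:FormDefinition} this is exactly $\Phi(x,y)$, completing the proof.

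The step I expect to be the main obstacle is the careful bookkeeping in the third paragraph: arranging transversality of the two surfaces while controlling all intersection points, handling the fact that $J$ and $K$ need not be disjoint on $\Sigma$ (so one genuinely must push one link off $\Sigma$), and tracking orientation conventions so the sign comes out right — exactly the kind of subtlety that appears already in the proof of Lemma~\ref{lem:3dHomology}(2), where one must keep the boundary class in the correct factor of $\langle\cdot,\cdot\rangle_\Sigma$. It is also worth making explicit that the use of the $\Sigma$-framing on the $2$-handles is what replaces the linking-plus-framing expression of the general Gompf--Stipsicz linking-matrix formula by a pure linking number in $\partial V_1$. Once the intersection number is pinned down as $\textnormal{lk}_{\partial V_1}(J,K)$, the identification with $\Phi(x,y)$ is immediate from Lemma~\ref{lem:LinkingNumber}, and descent to the quotient is the easy algebraic check noted above.
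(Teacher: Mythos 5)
Your proposal is correct and follows essentially the same route as the paper: both arguments represent classes of $H_2$ by surfaces built from weighted $2$-handle cores capped off by Seifert surfaces for the resulting links $J,K$ in $\partial X^1=\partial V_1$, reduce the $4$-dimensional intersection number to $\textnormal{lk}_{\partial V_1}(J,K)$ by pushing one surface into the interior, and then invoke Lemma~\ref{lem:LinkingNumber} to identify this linking number with $\Phi(x,y)$. The only differences are cosmetic (the paper phrases the reduction via the surjection $H_2(X^2)\to H_2(X)$ rather than directly in the handle chain complex, and takes the capping surfaces in $\partial X^1$ rather than in $V_1$).
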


\begin{proof}
Define a handle decomposition of $X$ as in the proof of Theorem \ref{thm:4dHomology}, so that $X^1=V_1$ and $X^2$ is the union of $V_1$ with a regular neighborhood of a defining collection of disks for $H_\gamma$;
we retain the notation for $\mathcal{C}_i$ and $\partial_i$ as well.

There is an isomorphism $f:H_2(X^2)\rightarrow L_\gamma\cap(L_\alpha+L_\beta)$ which factors through $\ker(\partial_2)$.  Explicitly, given any $z\in H_2(X^2)$ and an oriented surface $Q$ representing it, let
  \begin{equation}
  \tilde{f}(z)=\sum_{j=1}^g \tau_j h_j,
  \end{equation}
where $h_1,\ldots , h_g$ is the collection of $2$-handles generating $\mathcal{C}_2$ and $\tau_j$ is the algebraic intersection number of $Q$ and the belt sphere of $h_j\in \mathcal{C}_2$.
Note that, under the usual identification of $\mathcal{C}_2$ with $H_2(X^2,X^1)$, $\tilde{f}$ corresponds to the map $H_2(X^2)\rightarrow H_2(X^2,X^1)$ coming from the long exact sequence of the pair $(X_2,X_1)$.
Thus we see that $\tilde{f}$ is injective injective with image $\ker(\partial_2)$.  Composing $\tilde{f}$ with the isomorphism $\ker(\partial_2)\rightarrow L_\gamma\cap (L_\alpha\cap L_\beta)$ which sends each $h_j$ to its attaching sphere in $\Sigma$ yields the isomorphism $f$ we are interested in.

We claim that
  \begin{equation}\label{eq:FormEquality}
      \langle w,z\rangle_{X^2}=\Phi(f(w),f(z))
  \end{equation}
for all $w,z\in H_2(X^2).$
Let $\{\gamma_1,\ldots ,\gamma_g\}$ be a defining set of oriented curves for $H_\gamma$.  Given $w,z\in H_2(X^2)$, let $J$ and $K$ be disjoint unions of embedded curves from $\{\gamma_1,\ldots ,\gamma_g\}$ such that $f(w)=[J]$ and $f(z)=[K]$. (It may be necessary for $J$ and $K$ to each to contain multiple parallel copies of the same oriented curve $\gamma_i$.)  Since $J$ and $K$ are null-homologous in $\partial X_1=H_\alpha\cup H_\beta$, there exist Seifert surfaces $\tilde{F}$ and $\tilde{G}$ for $J$ and $K$ embedded in $\partial X_1.$  If $F$ and $G$ the surfaces obtained by capping off $\tilde{F}$ and $\tilde{G}$ with disks in $H_\gamma$, then $[F]=w$ and $[G]=z$.  We may assume that $F\cap H_\gamma$ and $G\cap H_\gamma$ are disjoint from one another, and by pushing the interior of $\tilde{G}$ into the interior of $X^1$ so that $G\cap \partial X^1=K$, we deduce that
\[
	\langle w,z\rangle_{X^2}
    	=\langle [F],[G]\rangle_{X^2}
        =\langle [\tilde{F}],[K]\rangle_{\partial X^1}
        =\textnormal{lk}(J,K).
\]
Lemma \ref{lem:LinkingNumber} implies that $\textnormal{lk}(J,K)=\Phi([J],[K])$.  Hence we have proven (\ref{eq:FormEquality}), and can conclude that $\Phi$ is the intersection form on $H_2(X^2)$.  The theorem now follows, since the map $H_2(X^2)\rightarrow H_2(X)$ induced by inclusion is a surjection which preserves intersection numbers, and the kernel of this map corresponds to $(L_\gamma\cap L_\alpha)+(L_\gamma\cap L_\beta)$ inside of $L_\gamma\cap(L_\alpha+L_\beta).$
\end{proof}

\begin{rmk}
The intersection from in the previous theorem is precisely the form $-\sigma(V;A,B,C)$ appearing in \cite{Wall} with $V=H_1(\Sigma)$, $A=L_\alpha$, $B=L_\beta$, and $C=L_\gamma.$  This may be unsurprising, since the main theorem of that paper implies that the intersection form of $X$ and $-\sigma(H_1(\Sigma);L_\alpha,L_\beta,L_\gamma)$ must have the same signature.  However, the fact that the two forms are identical is a special property of trisections and does not follow from \cite{Wall}.
\end{rmk}

\section{Applications}
\label{sec:applications}

With Theorems \ref{thm:4dHomology} and \ref{thm:IntersectionForm} in hand, we are ready to give some applications.  The foundation is a description of how to compute the second homology and intersection form of a $4$-manifold from an arbitrary trisection diagram.  The only data we need from the diagram is the triple of algebraic intersection matrices that is associated with it.

\begin{definition}
	\label{def:IntersectionMatrix}
Let $(\Sigma;\alpha,\beta,\gamma)$ be a trisection diagram of $X$, where for $\epsilon \in \{\alpha,\beta,\gamma\}$
we have $\epsilon=\epsilon_1\cup\cdots \cup \epsilon_g$ and each curve has been assigned an orientation.  Let $_\alpha Q_\beta$ be the matrix of intersection numbers $\langle [\alpha_i],[\beta_j]\rangle_\Sigma,$ and likewise for $_\beta Q_\alpha$, $_\gamma Q_\beta$, etc.
\end{definition}

The oriented trisection diagram $(\Sigma;\alpha,\beta,\gamma)$ determines bases of $L_\alpha$, $L_\beta,$ and $L_\gamma$, and the intersection matrices defined above represent the action of $\langle\cdot ,\cdot\rangle_\Sigma$ on $L_\alpha$, $L_\beta$ and $L_\gamma$ with respect to these bases.  For example, by Lemma \ref{lem:Lagrangian}, the subspace $L_\alpha\cap L_\beta$ can be expressed in terms of the $\alpha$-curves as the kernel of $_\beta Q_\alpha$ or in terms of the $\beta$-curves as the kernel of $_\alpha Q_\beta.$  By Corollary \ref{cor:H3Formula}, it follows that $H_3(X)$ is described in terms of the $\gamma$-curves by the formula
  \begin{equation}
      \label{eqn:easyH3}
      H_3(X)\cong \ker(_\alpha Q_\gamma)\cap\ker(_\beta Q_\gamma).
  \end{equation}
Using similar reasoning and Corollary \ref{cor:H2Formula}, we can also describe $H_2(X)$ in terms of the $\gamma$-curves by
  \begin{equation}
      \label{eqn:easyH2}
  H_2(X)\cong\frac{\ker(B\cdot {_\alpha}Q_\gamma)}{\ker(_\alpha Q_\gamma)+\ker(_\beta Q_\gamma)},
  \end{equation}
where $B$ is any matrix of row vectors which form a basis of
$\ker(_\beta Q_\alpha)=L_\alpha\cap L_\beta$.

In the following proposition, $I_g^k$ denotes the $g\times g$ matrix which is the identity matrix in its upper left $k\times k$ minor and zero elsewhere.  By Waldhausen's theorem on uniqueness of Heegaard splitting surfaces for $S^3$ \cite{Waldhausen}, we can always perform handleslides on the $\alpha$- and $\beta$-curves so that $_\alpha Q_\beta =I_g^{g-k_1}$.

\begin{proposition}\label{prop:GeneralMatrix}
Suppose $(\Sigma;\alpha,\beta,\gamma)$ is a $(g;k_1,k_2,k_3)$-trisection diagram, chosen so that $_\alpha Q_\beta=I_g^{g-k_1}.$  If $A$ is a matrix of column vectors in $L_\gamma\cap (L_\alpha+L_\beta)$ (expressed with respect to the $\gamma$-basis) which represents a basis of $H_2(X)/\textnormal{torsion}$, then $\Phi$ is represented by the matrix
\[
	A^T{_\gamma}Q_\beta I_g^{g-k_1} {_\alpha}Q_\gamma A
\]
with respect to this basis.
\end{proposition}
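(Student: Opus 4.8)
The plan is to reduce the statement to a single coordinate computation, using Theorem~\ref{thm:IntersectionForm} together with the definition of $\Phi$ (Definition~\ref{def:FormDefinition}). By Theorem~\ref{thm:IntersectionForm} the intersection form of $X$ is carried by the form induced by $\Phi$ on $\frac{L_\gamma\cap(L_\alpha+L_\beta)}{(L_\gamma\cap L_\alpha)+(L_\gamma\cap L_\beta)}$; since the columns of $A$, read in the $\gamma$-basis, represent a basis of $H_2(X)/\textnormal{torsion}$ inside $L_\gamma\cap(L_\alpha+L_\beta)$, the matrix of the intersection form in that basis is exactly the Gram matrix $\bigl(\Phi(v,w)\bigr)$ as $v,w$ range over the columns of $A$. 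So the real task is to find a clean coordinate formula for $\Phi(x,y)$ when $x$ and $y$ are represented by their $\gamma$-coordinate vectors $v$ and $w$, and then to check that assembling those scalars reproduces $A^T{_\gamma}Q_\beta I_g^{g-k_1}{_\alpha}Q_\gamma A$.

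To get that formula I would carry out the recipe of Definition~\ref{def:FormDefinition} explicitly: produce $x'\in L_\alpha$ with $x-x'\in L_\beta$ in terms of $v$. Writing $x'=\sum_i u_i[\alpha_i]$ and using that $\{[\beta_j]\}$ is a basis of $L_\beta$ with $L_\beta^\perp=L_\beta$ (Lemma~\ref{lem:Lagrangian}), the condition $x-x'\in L_\beta$ becomes the linear system $u^T{_\alpha}Q_\beta=v^T{_\gamma}Q_\beta$, i.e.\ $u^T I_g^{g-k_1}=v^T{_\gamma}Q_\beta$ after the normalization $_\alpha Q_\beta=I_g^{g-k_1}$. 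Granting (see the next paragraph) that the last $k_1$ entries of $v^T{_\gamma}Q_\beta$ vanish, the choice $u^T:=v^T{_\gamma}Q_\beta$ solves the system, because right-multiplication by $I_g^{g-k_1}$ then fixes $v^T{_\gamma}Q_\beta$. Plugging into $\Phi(x,y)=\langle x',y\rangle_\Sigma$ and expanding in the $\alpha$- and $\gamma$-bases gives $\Phi(x,y)=u^T{_\alpha}Q_\gamma w=v^T{_\gamma}Q_\beta{_\alpha}Q_\gamma w$, and reinserting $I_g^{g-k_1}$ (legal by the same vanishing) yields $v^T{_\gamma}Q_\beta I_g^{g-k_1}{_\alpha}Q_\gamma w$; letting $v,w$ run over the columns of $A$ gives the asserted matrix. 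No separate verification that $\Phi(x,y)$ is independent of the choice of $x'$ is required, since that is already recorded after Definition~\ref{def:FormDefinition}.

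The only step that is not pure bookkeeping — and the one I would write out carefully — is the claim that the last $k_1$ coordinates of $v^T{_\gamma}Q_\beta$ vanish whenever $x\in L_\alpha+L_\beta$. The $j$-th entry of $v^T{_\gamma}Q_\beta$ is $\langle x,[\beta_j]\rangle_\Sigma$; for $j>g-k_1$ the normalization $_\alpha Q_\beta=I_g^{g-k_1}$ says $\langle[\alpha_i],[\beta_j]\rangle_\Sigma=0$ for all $i$, so $[\beta_j]\in L_\alpha^\perp=L_\alpha$ (Lemma~\ref{lem:Lagrangian}) and hence $[\beta_j]\in L_\alpha\cap L_\beta=(L_\alpha+L_\beta)^\perp$; since $x\in L_\alpha+L_\beta$ this forces $\langle x,[\beta_j]\rangle_\Sigma=0$. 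In words, normalizing $_\alpha Q_\beta$ to $I_g^{g-k_1}$ turns the last $k_1$ $\beta$-curves into curves lying in $L_\alpha\cap L_\beta$, which is orthogonal to $L_\alpha+L_\beta$. Once this observation is in hand, everything else is the linear algebra sketched above packaged around Theorem~\ref{thm:IntersectionForm}, so I expect this normalization-dependent fact to be the entire obstacle.
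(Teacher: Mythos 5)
Your proposal is correct and follows essentially the same route as the paper: apply Definition~\ref{def:FormDefinition}, use the normalization $_\alpha Q_\beta=I_g^{g-k_1}$ to solve the linear system for $x'$, and expand $\langle x',y\rangle_\Sigma$ in coordinates. The only difference is that you work with row vectors and explicitly verify the solvability condition (that the last $k_1$ entries of $v^T{_\gamma}Q_\beta$ vanish for $x\in L_\alpha+L_\beta$), a point the paper leaves implicit by appealing to the existence of $x'$ noted after Definition~\ref{def:FormDefinition}.
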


\begin{proof}
Recall from Definition \ref{def:FormDefinition} that, for a pair of elements $x,y\in L_\gamma\cap(L_\alpha+L_\beta)$, $\Phi(x,y)=\langle x',y\rangle_\Sigma$ for some $x'\in L_\alpha$ such that $x-x'\in L_\beta$. Assuming $x$ has been expressed in terms of the $\gamma$-basis, this means we must find a solution $x'$ to the equation
  \begin{equation}\label{eq:Setup}
	_\beta Q_\alpha x'={_\beta}Q_\gamma x.
  \end{equation}
Such an $x'$ will be expressed with respect to the $\alpha$-basis.  Since we have assumed that $_\alpha Q_\beta=I_g^{g-k_1}$, it follows that $_\beta Q_\alpha=-I_g^{g-k_1}$, and so we can assume that $x'$ has been chosen so that its lower $g-k_1$ entries are all $0$. Thus we have
\[
	x'=-I_g^{g-k_1}{_\beta}Q_\gamma x.
\]
Since $x'$ is expressed with respect to the $\alpha$-basis and $y$ is to be expressed with respect to the $\gamma$-basis, the product $\langle x',y\rangle_\Sigma$ is given by
\[
	(x')^T{_\alpha}Q_\gamma y
    	= (-I_g^{g-k_1}{_\beta}Q_\gamma x)^T{_\alpha}Q_\gamma y
        = x^T{_\gamma}Q_\beta I_g^{g-k_1}{_\alpha}Q_\gamma y. \qedhere
\]
\end{proof}

In the case of a $(g;0,k_2,k_3)$-trisection, we can relax the triviality assumption on $_\alpha Q_\beta$ and still obtain a nice representation of $\Phi.$  Note that all 4-manifolds which admit handle decompositions without 1-handles will admit such trisections by the above construction.

\begin{proposition}\label{prop:LovelyProduct}
Suppose $(\alpha,\beta,\gamma)$ is a $(g;0,k_2,k_3)$-trisection diagram.  Then $\Phi$ is given on $L_\gamma$ by the matrix $$\Phi = {_\gamma}Q_\beta({_\alpha}Q_\beta)^{-1}{_\alpha}Q_\gamma.$$
\end{proposition}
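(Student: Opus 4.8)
The plan is to reuse, almost verbatim, the computation in the proof of Proposition~\ref{prop:GeneralMatrix}, but with the normalization ${}_\alpha Q_\beta = I_g^{g-k_1}$ used there replaced by the observation that when $k_1=0$ the matrix ${}_\alpha Q_\beta$ is \emph{automatically} invertible over $\Z$. So the first step --- and the only one with real content --- is to prove: if $k_1=0$, then $H_1(\Sigma)=L_\alpha\oplus L_\beta$, and hence $L_\gamma\subseteq L_\alpha+L_\beta$ and ${}_\alpha Q_\beta\in GL_g(\Z)$. To see this, note that $k_1=0$ forces $V_1$ to be a $4$-ball, so $\partial V_1 = W_\alpha\cup_\Sigma W_\beta$ is $S^3$ and $(\Sigma;W_\alpha,W_\beta)$ is a genus-$g$ Heegaard splitting of $S^3$. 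By Lemma~\ref{lem:3dHomology}(1), $L_\alpha\cap L_\beta\cong H_2(S^3)=0$. Since $H_1(S^3)=0$, the Mayer--Vietoris sequence of the triple $(\Sigma;W_\alpha,W_\beta)$ forces $\iota_\alpha\oplus\iota_\beta\colon H_1(\Sigma)\to H_1(W_\alpha)\oplus H_1(W_\beta)$ to be surjective; having trivial kernel, it is an isomorphism, and restricting it to $L_\alpha=\ker\iota_\alpha$ and to $L_\beta=\ker\iota_\beta$ identifies $L_\alpha$ with $0\oplus H_1(W_\beta)$ and $L_\beta$ with $H_1(W_\alpha)\oplus 0$, whence $H_1(\Sigma)=L_\alpha\oplus L_\beta$. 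Finally, $L_\alpha$ and $L_\beta$ are complementary direct summands that are isotropic for the intersection pairing on $H_1(\Sigma)$ (Lemma~\ref{lem:Lagrangian}), which is unimodular by Poincar\'e duality; writing the pairing in block form with respect to this decomposition shows that its off-diagonal block --- which in the $\alpha$- and $\beta$-bases is exactly ${}_\alpha Q_\beta$ --- must be unimodular.

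With that in hand I would follow the computation of Proposition~\ref{prop:GeneralMatrix}. Let $x,y\in L_\gamma$ have coordinate vectors $\mathbf{x},\mathbf{y}$ in the $\gamma$-basis; since $L_\gamma = L_\gamma\cap(L_\alpha+L_\beta)$ the form $\Phi$ is defined on all of $L_\gamma$, and by Definition~\ref{def:FormDefinition} we have $\Phi(x,y)=\langle x',y\rangle_\Sigma = (x')^T\,{}_\alpha Q_\gamma\,\mathbf{y}$, where $x'\in L_\alpha$ satisfies $x-x'\in L_\beta$. Pairing the identity $x=x'+(x-x')$ against the $\beta$-curves and using that $x-x'$ lies in the isotropic subgroup $L_\beta$ gives ${}_\beta Q_\alpha\,x'={}_\beta Q_\gamma\,\mathbf{x}$. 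Since ${}_\beta Q_\alpha=-({}_\alpha Q_\beta)^T$ is invertible by the first step, $x'=({}_\beta Q_\alpha)^{-1}\,{}_\beta Q_\gamma\,\mathbf{x}$, so
\[
\Phi(x,y)=\big(({}_\beta Q_\alpha)^{-1}\,{}_\beta Q_\gamma\,\mathbf{x}\big)^T\,{}_\alpha Q_\gamma\,\mathbf{y}
=\mathbf{x}^T\,({}_\beta Q_\gamma)^T\,\big(({}_\beta Q_\alpha)^T\big)^{-1}\,{}_\alpha Q_\gamma\,\mathbf{y}.
\]
Using the antisymmetry of $\langle\cdot,\cdot\rangle_\Sigma$ one has $({}_\beta Q_\gamma)^T=-{}_\gamma Q_\beta$ and $\big(({}_\beta Q_\alpha)^T\big)^{-1}=(-{}_\alpha Q_\beta)^{-1}=-({}_\alpha Q_\beta)^{-1}$; the two sign changes cancel and the formula $\Phi = {}_\gamma Q_\beta({}_\alpha Q_\beta)^{-1}{}_\alpha Q_\gamma$ drops out.

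The only genuine obstacle is the invertibility of ${}_\alpha Q_\beta$ --- that is, recognizing the $\alpha$/$\beta$ part of the diagram as a Heegaard diagram of $S^3$ and reading off $H_1(\Sigma)=L_\alpha\oplus L_\beta$ from it; once that is available, what remains is the same transpose-and-sign bookkeeping as in Proposition~\ref{prop:GeneralMatrix}, so the one thing to watch is the orientation conventions. As a consistency check, the formula reproduces the expected matrix in the $S^2\times S^2$ example of the introduction. One could instead deduce the proposition from Proposition~\ref{prop:GeneralMatrix} by handlesliding the $\alpha$- and $\beta$-curves --- via Waldhausen's theorem --- to arrange ${}_\alpha Q_\beta=I_g$, after verifying that ${}_\gamma Q_\beta({}_\alpha Q_\beta)^{-1}{}_\alpha Q_\gamma$ is unchanged by $\alpha$- and $\beta$-handleslides; but the direct computation above avoids having to track how the intersection matrices transform.
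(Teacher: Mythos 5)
Your proposal is correct and follows essentially the same route as the paper: the authors likewise observe that $k_1=0$ makes $(\Sigma;W_\alpha,W_\beta)$ a Heegaard splitting of $S^3$, so that $L_\gamma\cap(L_\alpha+L_\beta)=L_\gamma$ and ${}_\beta Q_\alpha$ is invertible, and then solve equation~\eqref{eq:Setup} and transpose exactly as you do. Your added detail on why ${}_\alpha Q_\beta$ is unimodular and on the sign bookkeeping is fine but only fleshes out what the paper leaves implicit.
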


\begin{proof}
The proof is a computation that follows the pattern of Proposition \ref{prop:GeneralMatrix}, but
in this case $H_\alpha$ and $H_\beta$ form a Heegaard splitting of $S^3$. Thus
$L_\gamma\cap (L_\alpha+L_\beta)=L_\gamma$ and $_\beta Q_\alpha$ is invertible, so we can
solve \eqref{eq:Setup} to obtain $x'=({_\beta}Q_\alpha)^{-1}{_\beta}Q_\gamma x$. For any $x,y\in L_\gamma$, we therefore have
\[
	\Phi(x,y)
    	=(({_\beta}Q_\alpha)^{-1}{_\beta}Q_\gamma x)^T{_\alpha}Q_\gamma y
        =x^T {_\gamma}Q_\beta({_\alpha}Q_\beta)^{-1}{_\alpha}Q_\gamma y.
\]
This completes the proof.
\end{proof}

By \cite{MeierSchirmerZupan}, every $4$-manifold which admits a handle decomposition without $1$- or $3$-handles admits a $(g;0,k_2,0)$-trisection, and we can make the following statement about such trisections.

\begin{theorem}\label{thm:AlgebraicTriviality}
Suppose $\mathcal{T}$ is a $(g;0,k_2,0)$-trisection of $X$, and let $Q_X$ be any matrix representing the intersection form of $X$.  Then $\mathcal{T}$ admits a diagram $(\alpha,\beta,\gamma)$ with the following properties:
\begin{enumerate}
  \item $(\alpha,\beta)$ is a standard diagram of $S^3$, with indices labeled so that $|\alpha_i\cap \beta_j|=\delta_{ij}$
  \item $_\gamma Q_\beta =\textnormal{Id}$
  \item   The matrix $_\alpha Q_\gamma $ has $Q_X$ as its upper left minor, and is zero elsewhere.
\end{enumerate}
\end{theorem}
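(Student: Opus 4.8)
The plan is to start from an arbitrary diagram $(\alpha,\beta,\gamma)$ for $\mathcal{T}$ and modify it by handleslides (and possibly stabilizations of the Heegaard splitting of $S^3$) until conditions (1)--(3) are met, all the while keeping track of the effect of these moves on the three intersection matrices via Proposition \ref{prop:LovelyProduct}. First I would invoke Waldhausen's theorem (as already used in Proposition \ref{prop:GeneralMatrix}): since $(\alpha,\beta)$ is a genus-$g$ Heegaard diagram of $S^3$, handleslides among the $\alpha$- and $\beta$-curves put it in the standard form $|\alpha_i\cap\beta_j|=\delta_{ij}$, after which we may orient so that $_\alpha Q_\beta=\mathrm{Id}$, establishing (1). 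Next, handleslides of the $\gamma$-curves act on $_\gamma Q_\beta$ by left-multiplication by an arbitrary element of $GL_g(\Z)$ (a handleslide of $\gamma_i$ over $\gamma_j$ is an elementary row operation, and these generate $GL_g(\Z)$ since every such matrix is a product of elementary matrices and diagonal sign changes, the latter realized by reversing orientations). Since $_\gamma Q_\beta$ is invertible over $\Z$ -- $W_\gamma$ and $W_\beta$ form a Heegaard splitting of $S^3$, exactly as in the proof of Proposition \ref{prop:LovelyProduct} -- we can slide the $\gamma$-curves until $_\gamma Q_\beta=\mathrm{Id}$, giving (2). The key bookkeeping point is that these $\gamma$-handleslides simultaneously change $_\alpha Q_\gamma$ by the corresponding row operations on its transpose, i.e. by right-multiplication; concretely, if the $\gamma$-slides are encoded by $g\in GL_g(\Z)$ acting as $_\gamma Q_\beta\mapsto g\cdot {_\gamma}Q_\beta$, then $_\alpha Q_\gamma\mapsto {_\alpha}Q_\gamma\cdot g^{-1}$, so after achieving (2) the matrix $_\alpha Q_\gamma$ equals $({_\gamma}Q_\beta)^{-1}$ in its original incarnation -- but by Proposition \ref{prop:LovelyProduct} the product $_\gamma Q_\beta({_\alpha}Q_\beta)^{-1}{_\alpha}Q_\gamma={_\gamma}Q_\beta\cdot{_\alpha}Q_\gamma$ (using $_\alpha Q_\beta=\mathrm{Id}$) represents the intersection form $\Phi=Q_X$, so once $_\gamma Q_\beta=\mathrm{Id}$ we get $_\alpha Q_\gamma=Q_X$ on the nose, already a square $g\times g$ matrix.

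At this stage, however, we have a $(g;0,0,0)$-situation if $k_2=0$; when $k_2>0$ the Heegaard splitting $(\Sigma;W_\gamma,W_\beta)$ is of $\#^{k_2}(S^1\times S^2)$ rather than $S^3$, so $_\gamma Q_\beta$ is not invertible and the argument above needs adjustment. The fix is to work instead with the splitting $(\Sigma;W_\alpha,W_\beta)$ of $S^3$ -- that is condition (1), which we have already arranged and which we now treat as fixed -- and to observe that $L_\gamma$ has rank $g$ while $L_\gamma\cap(L_\alpha+L_\beta)=L_\gamma$ still holds because $L_\alpha+L_\beta=H_1(\Sigma)$ (as $L_\alpha,L_\beta$ are complementary Lagrangians for the $S^3$ splitting). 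So Proposition \ref{prop:LovelyProduct} applies verbatim even when $k_2>0$: $\Phi$ is represented on all of $L_\gamma$ by $_\gamma Q_\beta({_\alpha}Q_\beta)^{-1}{_\alpha}Q_\gamma={_\gamma}Q_\beta\cdot{_\alpha}Q_\gamma$. The point is that now this $g\times g$ matrix $\Phi$ is \emph{not} unimodular: it has a rank-$(g-k_2)$ radical corresponding to $(L_\gamma\cap L_\alpha)+(L_\gamma\cap L_\beta)$, and the true intersection form $Q_X$ of $X$ is the induced nondegenerate form on the $(g-2k_2)$-dimensional... wait, rather on the quotient, whose rank is $g$ minus the rank of the radical. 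I would compute that radical's rank from $k_2$ and the genus, and then the task becomes: using the remaining freedom of $\gamma$-handleslides (which now act by $GL_g(\Z)$ on $_\gamma Q_\beta$ but must preserve the constraint relating $_\gamma Q_\beta$ and $_\alpha Q_\gamma$ imposed by the trisection), bring the pair into the normal form where $_\gamma Q_\beta=\mathrm{Id}$ and $_\alpha Q_\gamma$ is block-diagonal with $Q_X$ in the top-left $(\mathrm{rank}\,Q_X)\times(\mathrm{rank}\,Q_X)$ block and zeros elsewhere.

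The main obstacle -- and the step I expect to require genuine care rather than routine computation -- is exactly this last normalization: the $\gamma$-curves are not free to be slid arbitrarily, because the diagram must remain a valid trisection diagram, meaning $(\Sigma;W_\beta,W_\gamma)$ must remain a Heegaard splitting of $\#^{k_2}(S^1\times S^2)$ and $(\Sigma;W_\alpha,W_\gamma)$ a splitting of $\#^{?}(S^1\times S^2)$ with the right first Betti number. I would handle this by first using $\gamma$-handleslides to make $_\gamma Q_\beta$ of the form $I_g^{g-k_2}$ (possible by column/row reduction since $_\gamma Q_\beta$ has rank $g-k_2$, this being the relevant genus-vs-$S^1\times S^2$-summands count, and is realizable by curve slides since $(\Sigma;W_\gamma,W_\beta)$ is a standard splitting of $\#^{k_2}(S^1\times S^2)$ up to the relevant moves), then using the residual $\gamma$-slides that fix $I_g^{g-k_2}$ -- a copy of $GL_{k_2}(\Z)\ltimes(\text{something})$ -- together with $\alpha,\beta$-slides preserving condition (1), to clean up $_\alpha Q_\gamma$. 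A cleaner route, which I would pursue if the direct one gets mired in the stabilizer computation, is to go back to the \emph{four-dimensional} picture: stabilize the trisection (which corresponds to connect-summing with a standard $(1;0,0,0)$-piece and adds a hyperbolic summand to nothing/changes matrices predictably), or use that Proposition \ref{prop:GeneralMatrix} already exhibits $\Phi$ as $A^T{_\gamma}Q_\beta I_g^{g-k_1}{_\alpha}Q_\gamma A$ and argue that we can choose the defining curves realizing a given $A$ and given target matrices by an explicit construction of the $\gamma$-handlebody -- building the desired diagram directly from $Q_X$ rather than massaging an existing one. Either way, verifying that the constructed diagram is genuinely a diagram of the \emph{same} trisection $\mathcal{T}$ (not merely of a diffeomorphic 4-manifold) is where the argument must be tightest, and I would lean on the Gay--Kirby uniqueness of trisection diagrams up to handleslides and stabilization, checking that all moves used stay within that equivalence class.
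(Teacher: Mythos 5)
There is a genuine gap, and it begins with a bookkeeping error about which pair of handlebodies splits which boundary. For a $(g;0,k_2,0)$-trisection we have $\partial V_3 = W_\beta\cup W_\gamma = \#^{k_3}(S^1\times S^2)=S^3$ because $k_3=0$, so $_\gamma Q_\beta$ is a presentation matrix of $H_1(S^3)=0$ and hence unimodular \emph{regardless of} $k_2$; the splitting of $\#^{k_2}(S^1\times S^2)$ is $(W_\alpha,W_\gamma)$, so the matrix that fails to be invertible when $k_2>0$ is $_\alpha Q_\gamma$, not $_\gamma Q_\beta$. Your entire second paragraph is devoted to repairing a problem that does not exist, and your proposed repair (making $_\gamma Q_\beta=I_g^{g-k_2}$) would contradict Condition (2). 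Conditions (1) and (2) are obtained exactly as in your first paragraph, with no adjustment, which is what the paper does. Relatedly, your assertion that after (1) and (2) one has $_\alpha Q_\gamma=Q_X$ ``on the nose, already a square $g\times g$ matrix'' cannot be right when $k_2>0$: at that stage $_\alpha Q_\gamma$ represents the degenerate form $\Phi$ on all of $L_\gamma$, whose radical $(L_\alpha\cap L_\gamma)+(L_\beta\cap L_\gamma)$ has rank $k_2$, while $Q_X$ is the induced unimodular form on the rank-$(g-k_2)$ quotient $H_2(X)$.

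The more serious gap is that you never carry out the step you yourself flag as requiring ``genuine care'': converting $_\alpha Q_\gamma$ into the block form $\left(\begin{smallmatrix}Q_X&0\\0&0\end{smallmatrix}\right)$ by moves that preserve (1) and (2). This is the real content of the theorem. The paper does it with elementary simultaneous row/column operations (double index swaps, double sign changes, and the move $d_{ij}$ adding the $i$th column to the $j$th column and the $i$th row to the $j$th row), realized geometrically by sliding $\alpha_i$ over $\alpha_j$ and $\gamma_i$ over $\gamma_j$; the key device is a compensating slide of $\beta_i$ over $-\beta_j$ (Figure \ref{fig:DoubleHandleslide}) which undoes the damage these slides do to $_\alpha Q_\beta$ and $_\gamma Q_\beta$, so that (1) and (2) survive. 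Your fallback of ``building the desired diagram directly from $Q_X$'' is worse than the direct route: you would then have to show that the constructed diagram is a diagram of the \emph{same} trisection $\mathcal{T}$, which the Gay--Kirby moves do not give you for free and which amounts to a uniqueness question the theorem is careful to avoid.
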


\begin{proof}
Given any diagram $(\alpha,\beta,\gamma)$ of $\mathcal{T}$, by Waldhausen's theorem on the uniqueness of Heegaard splitting surfaces for $S^3$ \cite{Waldhausen} we can perform handle slides of the $\alpha$ and $\beta$ curves so that Condition (1) is satisfied.  Moreover we can orient the components of $\alpha$ and $\beta$ so that $_\alpha Q_\beta =({_\alpha}Q_\beta )^{-1}=\textnormal{Id}.$  Since $_\gamma Q_\beta$ is a presentation matrix of $H_1(S^3)=0$ it is unimodular, so after a sequence of row operations it can be reduced to the identity matrix, and these row operations can be geometrically realized.

Specifically, the act of multiplying the $i$th row of $_\gamma Q_\beta$ by $-1$ corresponds to reversing the orientation of $\gamma_i$, the act of switching row $i$ with row $j$ corresponds to swapping the indices of $\gamma_i$ and $\gamma_j$, and the act of adding or subtracting row $i$ from row $j$ corresponds to performing a handle slide of $\gamma_i$ over $\gamma_j$ and then replacing $\gamma_j$ with the resulting curve.  Hence, after a sequence of handle slides, orientation changes, and re-indexing moves on the $\gamma$ curves, Conditions (1) and (2) will both be satisfied by $(\alpha,\beta,\gamma)$.

This being done, by Proposition \ref{prop:LovelyProduct} the matrix $_\alpha Q_\gamma$ can be converted to the form required by Condition (3) by a sequence of double row/column swaps, double multiplication of rows and columns by $-1$, and moves $\pm d_{ij}$, where $d_{ij}$ denotes the operation on $_\alpha Q_\gamma$ which adds the $i$th column to the $j$th column, and then adds the $i$th row to the $j$th row, and $-d_{ij}$ denotes the move which subtracts rows and columns from each other.  We leave it to the reader to verify that the first two moves can be realized by index changes and orientation changes, respectively, and in the following paragraph we explain how the $d_{ij}$ moves can be realized geometrically as well, in a way that maintains Conditions (1) and (2).

Specifically, the column move in $d_{ij}$ can be geometrically realized by replacing $\alpha_j$ with a curve obtained by sliding $\alpha_i$ over $\alpha_j$, and similarly the row move corresponds to replacing $\gamma_j$ with a curve obtained by sliding $\gamma_i$ over $\gamma_j$.  These handleslides will change $_\alpha Q_\beta$ by adding its $i$th row to its $j$th row, and they will change $_\gamma Q_\beta$ by adding its $i$th column to its $j$th column.  However, so long as we have chosen the arc defining our handleslide of $\alpha_i$ over $\alpha_j$ to be disjoint from all of the $\alpha$ and $\beta$ curves except at its endpoints (as can always be done), then we can alway perform a cancellation move which replaces the curve $\beta_i$ with the result of sliding of $\beta_i$ over $-\beta_j$ in the manner shown in Figure \ref{fig:DoubleHandleslide}.  This geometric move makes the $\alpha$ and $\beta$ curves satisfy Condition (1) again, and on the algebraic level it corresponds to a subtraction of the $j$the column of $_\alpha Q_\beta$ from its $i$th column, and a subtraction of the $j$th row of $_\gamma Q_\beta$ from its $i$th row.  Thus we can geometrically realize the move $d_{ij}$ while maintaining Conditions (1) and (2), and the same reasoning works for $-d_{ij}.$
\end{proof}

\begin{figure}
	\begin{center}
    \begin{overpic}[width=0.5\textwidth]{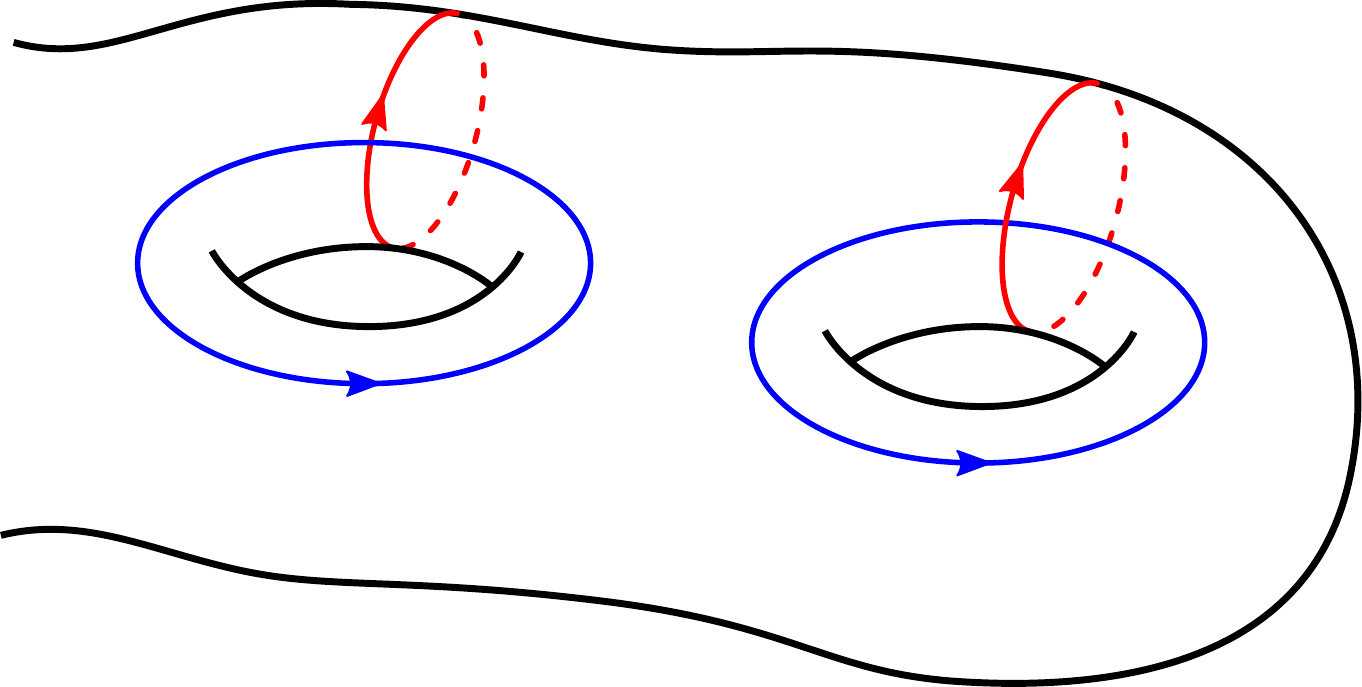}
        \put (23,46) {\color{red}$\beta_j$}
        \put (71,13) {\color{blue}$\alpha_i$}
        \put (69,40) {\color{red}$\beta_i$}
        \put (25,17) {\color{blue}$\alpha_j$}
    \end{overpic}

    \begin{overpic}[width=0.5\textwidth]{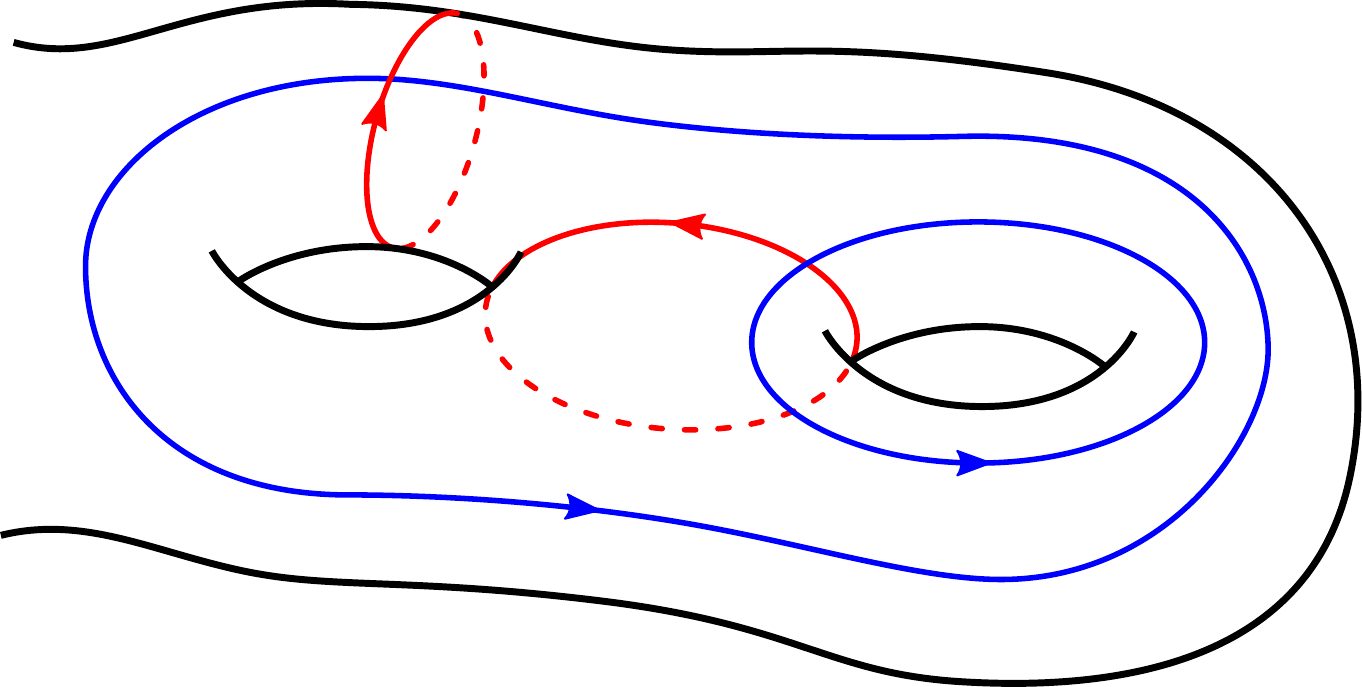}
        \put (23,46) {\color{red}$\beta_j$}
        \put (71,13) {\color{blue}$\alpha_i$}
        \put (40,27) {\color{red}$\beta_i-\beta_j$}
        \put (25,17) {\color{blue}$\alpha_j+\alpha_i$}

    \end{overpic}

    \end{center}
    \caption{A double handleslide which maintains duality.}
    \label{fig:DoubleHandleslide}
\end{figure}


It is tempting to hope that Theorem \ref{thm:AlgebraicTriviality} can be used to prove the triviality of a large class of trisections.  Specifically, if one could show that it is always possible to find a trisection diagram satisfying the equation $|\omega \cap \sigma|=|\langle \omega,\sigma\rangle_\Sigma |$ for all pairs of curves $\omega$ and $\sigma$ appearing in the diagram, then the triviality or near-triviality (in the case when the form contains $E_8$ summands) of all $(g;0,k_2,0)$-trisections would follow by choosing an appropriate $Q_X$ in Theorem \ref{thm:AlgebraicTriviality}.  However, a paper of Meier and Zupan appearing in this collection~\cite{MeierZupan} gives examples of exotic connected sums of $\mathbb{CP}^2$'s and $\overline{\mathbb{CP}^2}$'s admitting $(g;0)$ trisections.  Such manifolds cannot admit geometrically trivial trisection diagrams, even though Theorem \ref{thm:AlgebraicTriviality} tells us that such manifolds will admit homologically trivial diagrams.

Moreover, even when a trisection admits a trivial diagram, other diagrams of the same trisection can be chosen in which the discrepancy between geometric intersection numbers and algebraic intersections numbers is arbitrarily large.  For example, the standard $(2;0)$ trisection of $\mathbb{CP}^2\# \mathbb{CP}^2$ does indeed admit a trivial diagram $(\alpha,\beta,\gamma)$ such that $|\alpha_i\cap \beta_j|=|\alpha_i\cap \gamma_j|=|\beta_i\cap \gamma_j|=\delta_{ij}$ for $1\leq i,j,\leq 2.$  However, there exist separating curves $\sigma_1$ and $\sigma_2$ on
$\Sigma$ which bound disks in both $H_\alpha$ and $H_\beta$, and such that $\Sigma\setminus (\sigma_1\cup\sigma_2)$ consists entirely of disks.  Thus if $f:\Sigma\rightarrow \Sigma$ is the composition of the positive Dehn twist about $\sigma_1$ followed by the negative Dehn twist about $\sigma_2$, then $f$ is a pseudo-Anosov map  (by Penner's construction \cite{Penner}) lying in the intersection of the mapping class group of $(\Sigma; W_\alpha,W_\beta)$ and the Torelli subgroup of $\Sigma$ .  If $d_\Sigma$ is simpicial distance in the curve complex, it follows that $d_\Sigma(\gamma_i,f^n(\gamma_i))$ goes to infinity as $n$ does, which in turn implies that the quantities $|f^n(\gamma_i)\cap \alpha_i|$ go to infinity as well (as was shown by Hempel \cite{Hempel}).  Nevertheless, the diagrams $(\alpha,\beta,f^n(\gamma))$ satisfy the conditions of Theorem \ref{thm:AlgebraicTriviality}, and in fact define the standard genus $2$ trisection of $\mathbb{CP}^2\#\mathbb{CP}^2.$

\bibliographystyle{plain}

\end{document}